\newenvironment{psmallmatrix}
  {\left(\begin{smallmatrix}}
  {\end{smallmatrix}\right)}
\DeclareMathOperator{\Ht}{Ht}
\theoremstyle{definition}
\newtheorem{definition}{Definition}[section]
\newtheorem{thm}{Theorem}[section]
\newtheorem{prop}[thm]{Proposition}
\newtheorem{lem}[thm]{Lemma}
\newtheorem{rem}[thm]{Remark}
\providecommand{\subjclass}[1]
{
  \small	
  \textbf{\textit{Mathematics Subject Classification (2020) }} #1
}
\providecommand{\keywords}[1]
{
  \small	
  \textbf{\textit{Keywords }} #1
}
\title{On the Markoff spectrum on the Hecke group of index six}
\author{Byungchul Cha \and Dong Han Kim \and Deokwon Sim}
\newcommand{\Addresses}{{
\bigskip
\footnotesize
  B. Cha, \textsc{Department of Mathematics Education,
  Hongik University, 94 Wausan-ro, Mapo-gu, Seoul 04066, Korea}\par\nopagebreak
  \textit{E-mail address}: \texttt{cha@hongik.ac.kr}

  \medskip

  D.H. Kim, \textsc{Department of Mathematics Education, Dongguk University - Seoul, 30 Pildong-ro 1-gil, Jung-gu, Seoul, 04620 Korea}\par\nopagebreak
  \textit{E-mail address}: \texttt{kim2010@dgu.ac.kr}

  \medskip

  D. Sim, \textsc{Machine Learning TU, Samsung Advanced Institute of Technology, 130 Samsung-ro, Yeongtong-gu, Suwon-si, Gyeonggi-do, 16678 Korea}\par\nopagebreak
  \textit{E-mail address}: \texttt{deokwon.sim@snu.ac.kr}

}
}
\date{\today}
\begin{document}

\maketitle

\begin{abstract}
The discrete part of the Markoff spectrum on the Hecke group of index 6 was determined by A.~Schmidt. 
In this paper, we study its Markoff and Lagrange spectra after the smallest accumulation point $4/\sqrt3$.
We show that both the Markoff and Lagrange spectra below $4/\sqrt{3} + \epsilon$ have positive Hausdorff dimension for any positive $\epsilon$. 
We also find maximal gaps and an isolated point in the spectra. 
\end{abstract}

\medskip

\subjclass{11J06, 11J70} 

\medskip

\keywords{Lagrange spectrum, Markoff spectrum, Hecke group}

\section{Introduction}

Consider a finitely generated Fuchsian group $\mathbf{G}$ acting on 
the upper half plane 
$\mathbb{H}$. 
We define the notion of the Lagrange spectrum of $\mathbf{G}$ as follows (cf.~\cite{Leh85}, \cite{HS86}).
Let $\mathbb{Q}(\mathbf{G})$ be the set of points in $\mathbb{R} \cup \{ \infty \}$ that are fixed by parabolic elements of $\mathbf{G}$. 
We shall assume that $\infty\in \mathbb{Q}(\mathbf{G})$. 
Write $g\in\mathbf{G}$ as 
$g = 
\begin{psmallmatrix}
    a &  b \\ c & d
\end{psmallmatrix}
\in \mathrm{PSL}_2(\mathbb{R})$, 
so that the action of $g$ on $\mathbb{H}$ is expressed by linear fractional transformation 
\[
g(z) = \frac{a z + b}{c z + d}.
\]
To emphasize the dependency on $g$, we write $a = a(g)$ and $c = c(g)$. 
For $\xi\not\in\mathbb{Q}(\mathbf{G})$, we define its \emph{Lagrange number} $L_{\mathbf{G}}(\xi)$ to be
\begin{equation}\label{def_LG}
L_\mathbf{G}(\xi) = 
\limsup_{g \in \mathbf{G}} \left( c(g)^2 \left| \xi - g(\infty) \right| \right)^{-1}.
\end{equation}
In fact, $L_\mathbf{G}(\xi)$ is the limit superior of the height of the geodesic from $\infty$ to $\xi$ in the modular surface $\mathbb{H}/\mathbf{G}$ (See Section~\ref{Sec:Romik} below).
See Figure~\ref{Fig:geodesics} for the three geodesics of smallest heights on $\mathbb{H}/\mathbf H_6$.
The
\emph{Lagrange spectrum of} $\mathbf{G}$ is defined to be
\[
\mathscr L(\mathbf{G}) : = \left\{ L_\mathbf{G} (\xi) 
\, \Big| \, \xi \in \mathbb R \setminus \mathbb Q(\mathbf{G}) \right\}.
\]

\begin{figure}
\centering
\begin{tikzpicture}[scale=2].  
\node[below] at (0, 1) {$i$};

\node at (0, 1.7) {$F$};
	
\draw (-.8660,1.8) -- (-.8660,.5);
\draw (.8660,1.8) -- (.8660,.5);
\draw (.8660,.5) arc (30:150:1);

\draw[very thick] (.8660,.5) arc (30:150:1);
\end{tikzpicture}
\
\begin{tikzpicture}[scale=2]. 
\node[below] at (0, 1) {$i$};

\node at (0, 1.7) {$F$};
	
\draw (-.8660,1.8) -- (-.8660,.5);
\draw (.8660,1.8) -- (.8660,.5);
\draw (.8660,.5) arc (30:150:1);

\draw[very thick] (.8660,.8660) arc (56.309932:106.102113:1.0408);
\draw[very thick] (-.8660,.8660) arc (56.309932:43.85377:1.0408);
\draw[very thick] (-.8660,.8660) arc (123.69006:73.897886:1.0408);
\draw[very thick] (.8660,.8660) arc (123.69006:136.1462213:1.0408);
\end{tikzpicture}
\
\begin{tikzpicture}[scale=2].
\node[below] at (0, 1) {$i$};

\node at (0, 1.7) {$F$};
\draw (-.8660,1.8) -- (-.8660,.5);
\draw (.8660,1.8) -- (.8660,.5);
\draw (.8660,.5) arc (30:150:1);

\draw[very thick] (-.8660,.5) arc (154.30661909:53.076267:1.1532562);
\draw[very thick] (-.8660,0.92195444) arc (53.076267:39.858746:1.1532562);
\draw[very thick] (.8660,.891882) arc (121.7125152:135.1715213:1.048414);
\draw[very thick] (-.8660,.891882) arc (121.7125152:72.5198297:1.048414);
\draw[very thick] (.8660,.5) arc ({180-154.30661909}:{180-53.076267}:1.1532562);
\draw[very thick] (.8660,0.92195444) arc ({180-53.076267}:{180-39.858746}:1.1532562);
\draw[very thick] (-.8660,.891882) arc ({180-121.7125152}:{180-135.1715213}:1.048414);
\draw[very thick] (.8660,.891882) arc ({180-121.7125152}:{180-72.5198297}:1.048414);

\end{tikzpicture}
\caption{Three closed geodesics in 
\emph{the modular surface} $\mathbb{H}/\mathbf{H}_6$ 
with lowest heights.
}
\label{Fig:geodesics}
\end{figure}
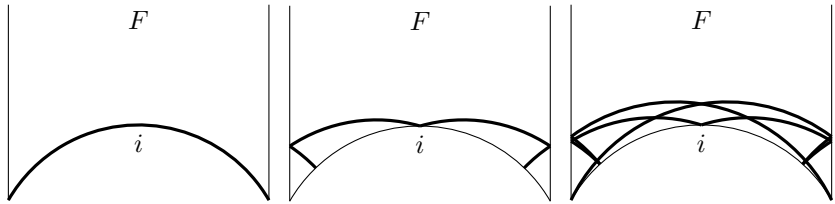

To define the Markoff spectrum of $\mathbf{G}$ (cf.~\cite{Sch76}), we 
consider a (real-coefficient) binary quadratic form 
$f(x, y)= \alpha x^2 + \beta xy + \gamma y^2$ 
with discriminant $\Delta(f) = \beta^2 - 4\alpha\gamma$ 
and write, for $g\in\mathbf{G}$, 
\begin{equation}\label{def_fg}
f(g) := 
f(
g 
\begin{psmallmatrix}
    1 \\ 0
\end{psmallmatrix}
) =f(a(g), c(g)).
\end{equation}
When $\Delta(f) > 0$, we define
\[
m_\mathbf{G}(f) 
=
\sup_{g\in \mathbf{G}}
\frac{\sqrt{\Delta(f)}}{|f(g)|}.
\]
The \emph{Markoff spectrum of} $\mathbf{G}$ is defined by
\[
\mathscr M(\mathbf{G})  = \left\{ m_\mathbf{G} (f) 
\, \Big| \, \Delta(f) > 0 \right\}.
\]

In this paper, we study the structure of the Lagrange and Markoff spectra of $\mathbf{G}$ when $\mathbf{G}$ is the \emph{index $q$ Hecke group} $\mathbf{H}_q$ ($q\ge3$), that is, the subgroup of $\mathrm{PSL}_2(\mathbb{R})$ generated by
\[
S = \begin{pmatrix} 0 & -1 \\ 1 & 0 \end{pmatrix} 
\qquad
\text{and}
\qquad
T = \begin{pmatrix} 1 & \lambda_q \\ 0 & 1 \end{pmatrix}
\]
where $\lambda_q = 2 \cos \frac{\pi}{q}$.
In particular, we are interested in studying the structure of 
$\mathscr{L}(\mathbf{H}_q)$
and
$\mathscr{M}(\mathbf{H}_q)$ when $q = 6$.
For $q = 5$, the spectrum was investigated by Series \cite{Ser88} and Vulakh studied the case of general $q \ge 6$ in \cite{Vul97}.
Recently, two of the authors of the present article investigated the case $q = 4$ in \cite{KS22}.

The Lagrange and Markoff spectra of several Hecke groups turn out to be closely related to some other spectra arising from different contexts. 
For example, when $q = 3$, it is easy to see that $\mathbf{H}_3 = \mathrm{PSL}_2(\mathbb{Z})$ and therefore
$\mathscr{L}(\mathbf{H}_3)$ and 
$\mathscr{M}(\mathbf{H}_3)$ are nothing but 
the classical Lagrange and Markoff spectra. 
In \cite{KS22}, it is explained that 
the spectrum for $\mathbf{H}_4$ is the same as the so-called 2-spectrum of A.~Schmidt \cite{Sch76}, as well as the spectrum arising from intrinsic Diophantine approximation of a unit circle $S^1$ in $\mathbb{R}^2$ \cite{CK22}. 

The case $q = 6$, which is the subject of the present article, is related to the 3-spectrum of A.~Schmidt \cite{Sch77}.
He considered a Fuchsian group $\Phi_3$, which is  
conjugate to $\mathbf{H}_6$.
Additionally, Schmidt's 3-spectrum is the same as the spectrum arising from the intrinsic Diophantine approximation associated to the curve defined by $x^2 + xy + y^2 = 1$ in $\mathbb{R}^2$, as is explained in \cite{CCGW22}.

The spectra we considered above, as well as some others, 
share certain structural similarities. 
First, they begin with a discrete part consisting of at most countably many points up to the 
smallest accumulation point. 
Indeed, Vulakh used some geometric methods to describe the discrete part of a spectrum that arises from a certain class of Fuchsian groups, including the Hecke groups (see \cite{Vul97} and \cite{Vul00}).
In the other extreme, the spectrum contains a half interval leading up to $\infty$, which is often called \emph{Hall's ray}.
Artigiani, Marchese, and Ulcigrai recently established in \cite{AMU20} the existence of Hall's ray in a very general setting. 
The middle part of a spectrum, that is, the region between its smallest accumulation point and the start of its Hall's ray appears to be the most technically difficult part to describe.
The purpose of the present paper is to study the middle part of 
$\mathscr{M}(\mathbf{H}_6)$ 
utilizing the digit sequences
developed in our prior work \cite{CK22}, \cite{KS22}, \cite{CCGW22}.

Concerning their initial discrete parts, the spectra for 
$\mathbf{H}_3$ and
$\mathbf{H}_4$
share a structural similarity;
the Lagrange numbers in the initial discrete parts in both cases are governed by the so-called \emph{Fricke identity} 
\[
x_1^2 + x_2^2 + x_3^2 = x_1x_2x_3.
\]
For example, see Proposition 4.3 and the equation (4.1) in \cite{Aig13}.
On the other hand, 
for the case of $\mathbf{H}_6$, which is the only arithmetic group other than $\mathbf{H}_3$ and $\mathbf{H}_4$ among all Hecke groups $\mathbf{H}_q$, 
the Lagrange numbers in the initial discrete part 
are not governed by the Fricke identity and
have a simpler structure.
See \cite{CCGW22} and \cite{Sch76} for detail.
However, we show in the present paper that 
the middle part of 
$\mathscr{M}(\mathbf{H}_6)$ 
shares similar features with the classical Markoff spectrum.
Recall that, in the classical case (see \cite{Cus74}),
there are several maximal gaps of the spectrum in its middle part, the largest of which is the interval $\left(\sqrt{12},\sqrt{13}\right)$, and an isolated point $\sqrt{13}$.
Recently, Moreira \cite{Mor18} showed that the (classical) Lagrange spectrum has positive Hausdorff dimension past its smallest accumulation point and has full Hausdorff dimension below $\sqrt{12}$.

Regarding $\mathbf{H}_6$, we prove the following theorems.
Here and after, we denote by $\dim_H$ the Hausdorff dimension.
\begin{thm}\label{thm_dimension}
For any $\varepsilon >0$, we have
\[
\dim_H \left( \mathscr M (\mathbf H_6) \cap \left[0, \frac{4}{\sqrt{3}} + \epsilon\right) \right) \ge \dim_H \left( \mathscr L (\mathbf H_6) \cap \left[0, \frac{4}{\sqrt{3}} + \epsilon\right) \right) > 0.
\]
\end{thm}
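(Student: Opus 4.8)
The plan is to construct, for each $\epsilon>0$, a Cantor-like subset $K_\epsilon$ of the interval $[0, 4/\sqrt{3}+\epsilon)$ that is entirely contained in $\mathscr L(\mathbf H_6)$ and has positive Hausdorff dimension; since $\mathscr L(\mathbf H_6)\subseteq\mathscr M(\mathbf H_6)$ in general (every Lagrange number is also a Markoff number, because the $\limsup$ defining $L_{\mathbf G}$ is dominated by the $\sup$ defining $m_{\mathbf G}$ for the associated form), the displayed chain of inequalities follows once the rightmost inequality is established. Concretely, I would work with the digit/continued-fraction-type coding of geodesics on $\mathbb H/\mathbf H_6$ developed in the cited prior work \cite{CCGW22}, \cite{KS22}, \cite{CK22}: a point $\xi\notin\mathbb Q(\mathbf H_6)$ corresponds to a bi-infinite sequence of digits, and $L_{\mathbf H_6}(\xi)$ is computed as the $\limsup$ over shifts of a continuous function of the sequence (the "height" function), exactly as in the classical Markoff–Hurwitz theory. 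The accumulation point $4/\sqrt{3}$ arises as the value attached to a specific periodic digit pattern, so the strategy is to exhibit a large family of sequences all of whose shifted height values stay just below $4/\sqrt{3}+\epsilon$.

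The key steps, in order: (1) recall the symbolic coding and the formula expressing $L_{\mathbf H_6}(\xi)$ as $\limsup_n \phi(\sigma^n \mathbf a)$ for the digit sequence $\mathbf a$ of $\xi$, together with the identification of which periodic sequence realizes the value $4/\sqrt{3}$; (2) identify a finite "alphabet" $\mathcal B$ of admissible blocks such that any bi-infinite concatenation of blocks from $\mathcal B$ is an admissible digit sequence and has all height values $\le 4/\sqrt 3 + \epsilon/2$ — this requires a quantitative continuity/perturbation estimate showing that inserting only blocks that are "close enough" to the extremal periodic block keeps the height controlled; (3) choose $\mathcal B$ large enough (at least two distinct blocks, and more as one refines) that the set of $\xi$ whose digit sequence is such a concatenation forms a self-similar Cantor set, and estimate its Hausdorff dimension from below using the standard mass-distribution principle, getting a bound of the form $\log(\#\mathcal B)/(C\cdot\text{maxblocklength})>0$; (4) verify that for each such $\xi$ the $\limsup$ is actually attained (not merely bounded), so that $L_{\mathbf H_6}(\xi)$ genuinely lands in the target interval and not below some gap — equivalently, arrange the blocks so the height oscillates up to near $4/\sqrt 3$ infinitely often, placing $L_{\mathbf H_6}(\xi)$ in $[4/\sqrt3 - \delta, 4/\sqrt3+\epsilon)$.

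The main obstacle I anticipate is step (2): obtaining a sharp enough quantitative handle on the height function near the extremal periodic configuration to guarantee that an entire \emph{infinite} family of admissible concatenations — not just a few exceptional sequences — stays below $4/\sqrt 3+\epsilon$. In the classical case this is the delicate combinatorial analysis behind the fact that sequences avoiding the patterns that push the value past $\sqrt{12}$ form a positive-dimensional set (Moreira's theorem \cite{Mor18}); the analogue here needs the explicit recurrence relations for the $\mathbf H_6$ digit algorithm (the $\lambda_6=\sqrt 3$ arithmetic) and a careful case analysis of how admissible blocks interact across their boundaries. A secondary difficulty is ensuring the renewal/admissibility constraints of the $\mathbf H_6$ coding (which blocks may legally follow which) do not collapse the candidate Cantor set to dimension zero; this is handled by choosing the building blocks long enough that the adjacency graph on $\mathcal B$ is still richly connected, at the cost of a smaller but still strictly positive dimension bound. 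Once these estimates are in place, the conclusion is immediate: $K_\epsilon\subseteq\mathscr L(\mathbf H_6)\cap[0,4/\sqrt3+\epsilon)\subseteq\mathscr M(\mathbf H_6)\cap[0,4/\sqrt3+\epsilon)$, so all three quantities in the theorem are positive.
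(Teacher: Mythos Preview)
Your high-level plan is correct and matches the paper's approach: build a Cantor family of digit sequences whose associated Lagrange values all lie below $4/\sqrt3+\epsilon$, and deduce positive Hausdorff dimension inside the spectrum. However, there is a genuine gap between your step~(3) and the conclusion. In step~(3) you estimate the dimension of the set of \emph{points} $\xi$ (equivalently, of digit sequences) produced by the block construction; the theorem asks for the dimension of the set of \emph{Lagrange values} $L_{\mathbf H_6}(\xi)$. The map $\xi\mapsto L_{\mathbf H_6}(\xi)$ is far from injective and can collapse dimension entirely---uncountably many distinct sequences can share the same $\limsup$. Your step~(4) only pins each $L_{\mathbf H_6}(\xi)$ into an interval $[4/\sqrt3-\delta,\,4/\sqrt3+\epsilon)$, which says nothing about the dimension of the image. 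As written, the argument would show that a positive-dimensional set of $\xi$'s have Lagrange number in the target range, not that the range of Lagrange numbers itself is positive-dimensional.

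The paper closes this gap with a device you are missing (Lemma~\ref{sections}). For each sequence $P$ in a Cantor set $E$ built from blocks $A=(42)^{m+1}3$ and $B=(42)^m3$, the paper constructs a specific bi-infinite sequence
\[
T_P={}^\infty B\,A^3 W_1\,B^{2} A^3 W_2\,B^{3} A^3 W_3\cdots,
\]
where $W_k$ is the $k$-th finite prefix of $P$ and the exponents on $B$ grow without bound, and then proves the \emph{exact} formula $\mathcal L(T_P)=[(B^*)^\infty]+[A^3 P]$. Since $[P]\mapsto[A^3P]=N_A^3\cdot[P]$ is a M\"obius map and hence bi-Lipschitz on the compact set $\{[P]:P\in E\}$, the image $K=\{\mathcal L(T_P):P\in E\}$ inherits the positive dimension established for $E$ in Lemma~\ref{dimension}. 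The nonstandard growing-$B^k$ pattern is precisely what forces the $\limsup$ to be realized along a single predictable family of sections, converting an a~priori uncontrolled $\limsup$ into an explicit bi-Lipschitz function of $P$. Without this (or an equivalent injectivity/regularity mechanism for $\xi\mapsto L(\xi)$), your outline does not reach the conclusion.
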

\begin{thm}\label{thm1}
The intervals 
$\left(\frac{\sqrt{143}}{5}, \sqrt{7} \right)$ and $\left( \sqrt{7}, \frac{13\sqrt{3}+13\sqrt{7}+\sqrt{143}}{26} \right)$
are maximal gaps in $\mathscr{M}(\mathbf{H}_6)$ and $\mathscr{L}(\mathbf{H}_6)$.
In particular, $\sqrt7$ is an isolated point of $\mathscr{M}(\mathbf{H}_6)$ and $\mathscr{L}(\mathbf{H}_6)$.
\end{thm}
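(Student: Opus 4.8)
The plan is to realize both spectra combinatorially, so that membership of a number in $\mathscr{M}(\mathbf H_6)$ or $\mathscr{L}(\mathbf H_6)$ is governed by a shift space on the digit alphabet developed in the authors' prior work \cite{CCGW22,CK22,KS22}. Concretely, one attaches to a bi-infinite digit sequence $\mathbf{w}$ a value $\mu(\mathbf{w})$ (the analogue of the classical two-sided continued-fraction cocycle), so that $m_{\mathbf H_6}(f)$ equals $\sup_{n} \mu(\sigma^n \mathbf{w})$ for the sequence $\mathbf{w}$ coding the geodesic attached to $f$, and $\mathscr{M}(\mathbf H_6)$ is exactly the set of such suprema as $\mathbf{w}$ ranges over admissible sequences; likewise $\mathscr{L}(\mathbf H_6)$ uses $\limsup$ in place of $\sup$. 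I expect the excerpt's earlier sections to have set up precisely this dictionary together with the monotonicity/comparison lemmas (``$\mu$ increases if a digit is enlarged'', and a Perron-type rule for comparing two sequences by the first place they differ). Granting that, the theorem becomes a finite, if delicate, combinatorial optimization.

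The first step is to locate the sequence(s) realizing the value $\sqrt 7$ and to show $\sqrt 7$ itself is attained: identify the (eventually) periodic digit word $\mathbf{w}_0$ whose closed geodesic has height $\sqrt 7$ — this should be one of the three lowest closed geodesics depicted in Figure~\ref{Fig:geodesics}, so $\sqrt 7 \in \mathscr{L}(\mathbf H_6)\subseteq \mathscr{M}(\mathbf H_6)$. Next, for the lower gap $\left(\tfrac{\sqrt{143}}{5},\sqrt 7\right)$: one shows that any admissible $\mathbf{w}$ with $\sup_n \mu(\sigma^n\mathbf{w}) < \sqrt 7$ is forced, by the comparison lemmas, to avoid certain finite subwords, and that the remaining combinatorial constraints pin $\mathbf{w}$ down to a small explicit family (essentially a renewal/substitution structure), on which $\sup_n\mu$ takes only values $\le \tfrac{\sqrt{143}}5$. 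The sup value $\tfrac{\sqrt{143}}5$ should be computed as $\mu$ of an explicit periodic word, and the fact that no admissible sequence lands strictly between is a case analysis on how blocks can be concatenated. For the upper gap $\left(\sqrt 7,\tfrac{13\sqrt3+13\sqrt7+\sqrt{143}}{26}\right)$: symmetrically, any $\mathbf{w}$ with $\sup_n\mu(\sigma^n\mathbf{w})$ slightly above $\sqrt 7$ is shown to contain a forced subword that already pushes the sup up to at least the stated right endpoint, whose value one again identifies as $\mu$ of a specific (eventually periodic) word. Maximality of each gap then follows by exhibiting sequences whose sups converge to the endpoints from the correct side, i.e. $\tfrac{\sqrt{143}}5$ and the right endpoint are genuine accumulation/limit points of the spectrum.

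To finish, one must rule out the isolated point being merely a Lagrange-but-not-Markoff phenomenon or vice versa: since $\mathscr{L}(\mathbf H_6)\subseteq\mathscr{M}(\mathbf H_6)$ and the realizing word $\mathbf{w}_0$ is periodic (so its $\limsup$ equals its $\sup$ equals $\sqrt 7$), $\sqrt 7$ lies in both; and because both gap arguments only use the universal lower bound $\sup_n \mu \le \limsup_n \mu$ fails in general but the \emph{exclusion} arguments bound $\sup$ from the relevant side using only finitely many windows, they apply verbatim to $\mathscr{L}$ as well. Hence the two intervals are maximal gaps in both spectra and $\sqrt 7$ is isolated in both.

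The main obstacle I anticipate is the exclusion step for the \emph{upper} gap: proving that \emph{every} admissible digit sequence whose associated value exceeds $\sqrt 7$ already reaches at least $\tfrac{13\sqrt3+13\sqrt7+\sqrt{143}}{26}$. This is where the classical Markoff-spectrum analysis (Cusick's treatment of the gap below $\sqrt{13}$) is genuinely intricate, and the $\mathbf H_6$ digit combinatorics — with its larger alphabet and the admissibility rules coming from the $\lambda_6=\sqrt3$ continued-fraction algorithm — will require a careful, somewhat lengthy finite case distinction, organized by the location and multiplicity of the ``large'' digits and the blocks separating them, together with repeated application of the monotonicity and Perron-comparison lemmas to reduce each case to an explicit periodic competitor.
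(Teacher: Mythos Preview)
Your plan is essentially the paper's own strategy: the combinatorial dictionary (bi-infinite $\mathbf H_6$-sequences, the height functional $L(P^*|Q)=[P]+[Q]$, and the monotonicity/comparison lemma) is exactly what Section~\ref{Sec:Romik} sets up, and Section~\ref{Sec:Gaps} carries out precisely the forbidden-subword case analysis you describe. The specific execution is organized into four short lemmas (ruling out digits $1,5$ unless $A={}^\infty(51)^\infty$; then ruling out $443,444$; then $4423,4424$ under the accumulated constraints), which forces $A={}^\infty(4422)^\infty$ for the first gap and pushes the value up to $[(4224)^\infty]+[4(23)^\infty]$ for the second; the accumulation points are exhibited by the explicit families ${}^\infty 3(43)^{k+l}3^\infty$ and ${}^\infty(4224)4(23)^k3^\infty$.

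Two small remarks. First, your sentence ``$\sup_n\mu\le\limsup_n\mu$ fails in general'' has the inequality reversed; what you want is simply $\mathscr L(\mathbf H_6)\subset\mathscr M(\mathbf H_6)$, so an $\mathscr M$-gap is automatically an $\mathscr L$-gap. Second, for \emph{maximality} in $\mathscr L$ you must place the three endpoints in $\mathscr L$; $\sqrt{143}/5$ and $\sqrt7$ come from the periodic words ${}^\infty(43)^\infty$ and ${}^\infty(51)^\infty$, but $\tfrac{13\sqrt3+13\sqrt7+\sqrt{143}}{26}$ is realized in the paper only as a Markoff value of the non-periodic ${}^\infty(4224)4(23)^\infty$, whose extremal section does not recur. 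The paper is itself terse here (Theorem~\ref{thm:three_gaps1} is stated only for $\mathscr M$), so to make your $\mathscr L$ claim airtight you should either exhibit a sequence whose $\limsup$ equals this endpoint or argue via closure/accumulation in $\mathscr L$.
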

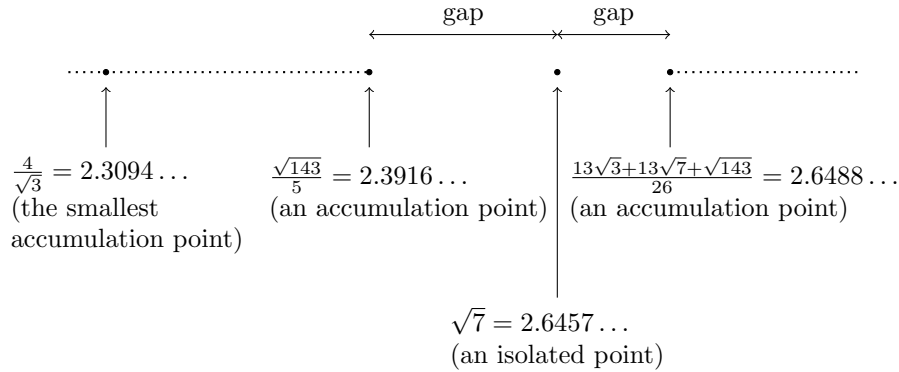
\begin{figure}
\begin{center}
\begin{tikzpicture}[every text node part/.style={align=left}]
  \draw[thick, dotted] (1, 0) -- (5, 0);
  \draw[thick, dotted] (9, 0) -- (11.5, 0);
  \filldraw (1.5,0) circle (1pt);
  \filldraw (5,0) circle (1pt);
  \filldraw (7.5,0) circle (1pt);
  \filldraw (9,0) circle (1pt);
  
  \draw[thin, <->] (5, 0.5) -- node[midway, above]{gap} (7.5, 0.5);
  \draw[thin, <->] (7.5, 0.5) -- node[midway, above]{gap} (9, 0.5);

  \draw[thin, ->] (1.5, -1) node[below, xshift = 8pt] {%
    $\frac{4}{\sqrt3} 
    = 2.3094\dots$ \\
    (the smallest \\ accumulation point)
} -- (1.5, -0.2);

  \draw[thin, ->] (7.5, -3) node[below] {%
    $\sqrt{7} =2.6457\dots$ 
    \\ 
    (an isolated point)
} -- (7.5, -0.2); 
  \draw[thin, ->] (5, -1) node[below, xshift=15pt] {%
    $\frac{\sqrt{143}}{5} 
    = 2.3916\dots$ \\
  (an accumulation point)
} -- (5, -0.2);

\draw[thin, ->] (9, -1) node[below, xshift=25pt] {%
  $\frac{13\sqrt{3} +13\sqrt7 + \sqrt{143}}{26} 
    = 2.6488\dots$ \\
  (an accumulation point)
} -- (9, -0.2);
\end{tikzpicture}
\end{center}
    \caption{Gaps in $\mathscr{M}(\mathbf{H}_6)$. (This figure is not drawn to scale.)}
    \label{fig:my_label}
\end{figure}
Note from \cite{Sch76} and \cite{CCGW22} that $4/\sqrt3$ is the smallest accumulation point of $\mathscr{M}(\mathbf{H}_6)$.
Therefore, the two maximal gaps in Theorem~\ref{thm1} are in the middle part of $\mathscr{M}(\mathbf{H}_6)$.
See Figure~\ref{fig:my_label}.

As for the Hecke groups of higher index $q > 6$,
we speculate that some properties of $\mathscr{M}(\mathbf{H}_6)$
would continue to be true.
For example, Vulakh showed in \cite{Vul97} that the smallest accumulation point of $\mathscr{M}(\mathbf{H}_q)$ for even $q\ge 4$ is $2/\cos(\frac{\pi}q)$.
We expect that the Hausdoff dimension past this value becomes positive for all (even) $q>6$, that is, an analogue of Theorem 1.1 holds true. 
However, the authors are unsure how maximal gaps in $\mathscr{M}(\mathbf{H}_q)$ are distributed for higher $q$ in general,
as this would require more detailed analysis based on our digit expansion.


In Section~\ref{Sec:Romik}, 
we identify a geodesic in $\mathbb{H}$ with a bi-infinite $\mathbf{H}_6$-sequence and, 
from this, we show that Markoff and Lagrange numbers can be computed by using bi-infinite $\mathbf{H}_6$-sequences. 
We prove Theorem~\ref{thm1} in Section~\ref{Sec:Gaps}. 
Finally, Theorem~\ref{thm_dimension} is proven in Section~\ref{Sec:dim}.

\section{Expansion of real numbers by the Hecke group}\label{Sec:Romik}

The goal of this section is to define a digit expansion, which we call the \emph{$\mathbf H_6$-expansion}, for every oriented geodesic in the upper half plane $\mathbb H$. 
In \cite{Ros54}, Rosen developed a continued fraction algorithm for the Hecke group.
See \cite{Nak95}, \cite{SS95}, \cite{MM10} and \cite{Pan22} for other developments.
In this article, we follow the approach given in Hass and Series \cite{HS86} and Series \cite{Ser88}.
In particular, see \S2.3 in \cite{HS86}.

Let $\mathbf{H}_6$ be the subgroup of $\mathrm{PSL}_2(\mathbb{R})$ generated by  
$$
S = \begin{pmatrix}
0 & -1  \\ 1 & 0
\end{pmatrix} \quad \text{and } \quad
T = \begin{pmatrix}
1 & \sqrt 3 \\ 0 & 1
\end{pmatrix}.
$$
Indeed, we have
\begin{multline*}
\mathbf{H}_6 = \left\{ \begin{pmatrix} a & \sqrt 3 b \\ \sqrt 3 c & d\end{pmatrix} \, \big| \ ad-3bc = 1, \, a, b,c,d \in \mathbb Z \right\}  \\
\cup \left\{ \begin{pmatrix} \sqrt 3 a & b \\ c & \sqrt 3 d\end{pmatrix} \, \big| \ 3ad-bc = 1, \, a,b,c,d \in \mathbb Z \right\}
\end{multline*}
and thus $\mathbb Q(\mathbf H_6) = \sqrt 3 \mathbb Q$.
See, for example, \cite{Par77}.
We let 
$\mathrm{PSL}_2(\mathbb R)$ 
act on $\mathbb H$ by the fractional linear map, i.e., 
for 
$M = \begin{psmallmatrix} a & b \\ c & d \end{psmallmatrix} \in \mathrm{PSL}_2(\mathbb R)$,
we have
$$M \cdot z = 
\dfrac{az+b}{cz+d}. 
$$
Consider the fundament domain $\Xi$ bordered 
by $x=0$, $x= \sqrt 3$, $|z| = 1$ and $|z - \sqrt 3 | = 1$. 
See Figure~\ref{Fig_Fund_Domain} (left) for the fundamental domain $\Xi$ of $\mathbf{H}_6$.
Let 
$$
R =  ST^{-1}= \begin{pmatrix}
0 & - 1  \\ 1 &  -\sqrt 3
\end{pmatrix}.
$$
Then 
$R^6 = I$ (as an element of $\mathrm{PSL}_2(\mathbb R)$).
Let $\bm\delta_0$ be the geodesic in $\mathbb{H}$ connecting $0$ and $\infty$
and let $\bm\delta_i$ = $R^i(\bm\delta_0)$ for $i = 1,\dots, 5$.
Let $E_i = R^i S R^{-i}$ for $i = 0,1,\dots 5$.
Then $E_i$ is the involution that preserves $\bm\delta_i$ and exchanges the two regions separated by $\bm\delta_i$.
Let $\bm\Gamma_6$ be the subgroup of $\mathbf H_6$ generated by $E_0$, $E_1$, \dots , $E_5$.
Then $\Sigma = \Xi \cup R(\Xi) \cup R^2(\Xi) \cup \dots \cup R^5(\Xi)$ is a fundamental domain of $\bm\Gamma_6$.  
See Figure~\ref{Fig_Fund_Domain} (right) for the fundamental domain $\Sigma$ of $\bm\Gamma_6$ and the geodesics $\bm\delta_i$'s.

\begin{figure}
\centering
\begin{tikzpicture}[scale=2.8]
\node[below] at (0, 0) {$\frac 01$};
\node[below] at ({sqrt(3)}, 0) {$\frac{\sqrt 3}{1}$};
\node[below] at ({sqrt(3)/2}, 0) {$\frac{\sqrt 3}{2}$};

\node[left] at ({sqrt(3)/2}, 1.2) {$\Xi$};

\draw[thick] (-.1,0) -- ({sqrt(3)+.1},0);

\draw[thick] ({sqrt(3)},1) arc (90:150:1);
\draw[thick] (0,1) arc (90:30:1);

\draw[dashed] ({sqrt(3)/2},1/2) -- ({sqrt(3)/2},1.5);
\draw[thick] (0,1) -- (0,1.5);
\draw[thick] ({sqrt(3)},1) -- ({sqrt(3)},1.5);
\end{tikzpicture}
\begin{tikzpicture}[scale=2.8]
\node[below] at (0, 0) {$\frac 01$};
\node[below] at ({1/sqrt(3)}, 0) {$\frac{1}{\sqrt 3}$};
\node[below] at ({2/sqrt(3)}, 0) {$\frac{2}{\sqrt 3}$};
\node[below] at ({sqrt(3)}, 0) {$\frac{\sqrt 3}{1}$};
\node[below] at ({sqrt(3)/2}, 0) {$\frac{\sqrt 3}{2}$};
\node[left] at ({sqrt(3)/2}, 1.2) {$\Sigma$};

\node[left] at (0, .9) {$\bm\delta_0$};
\node[below] at ({1/(2*sqrt(3))}, .26) {$\bm\delta_1$};
\node[below] at ({5/(4*sqrt(3))}, .16) {$\bm\delta_2$};
\node[below] at ({7/(4*sqrt(3))}, .16) {$\bm\delta_3$};
\node[below] at ({5/(2*sqrt(3))}, .26) {$\bm\delta_4$};
\node[right] at ({sqrt(3)}, .9) {$\bm\delta_5$};

\draw[dashed] ({sqrt(3)/2},0) -- ({sqrt(3)/2},1.5);
\draw[dashed] (0,0) arc (180:0:{1/sqrt(3)});
\draw[dashed] ({sqrt(3)},0) arc (0:180:{1/sqrt(3)});

\draw[thick] (-.1,0) -- ({sqrt(3)+.1},0);

\draw[thick,blue] ({sqrt(3)},1) arc (90:172:1);
\draw[thick,blue] (0,1) arc (90:8:1);
\draw[thick,blue] ({sqrt(3)/2},1/2) arc (90:30:1/2);
\draw[thick,blue] ({sqrt(3)/2},1/2) arc (90:150:1/2);
\draw[thick] (0,0) -- (0,1.5);
\draw[thick] ({sqrt(3)},0) -- ({sqrt(3)},1.5);
\draw[thick] (0,0) arc (180:0:{1/(2*sqrt(3))});
\draw[thick] ({1/sqrt(3)},0) arc (180:0:{1/(4*sqrt(3))});
\draw[thick] ({2/sqrt(3)},0) arc (0:180:{1/(4*sqrt(3))});
\draw[thick] ({2/sqrt(3)},0) arc (180:0:{1/(2*sqrt(3))});
\end{tikzpicture}
\caption{The fundamental domain $\Xi$ of the group $\mathbf H_6$ on the upper half plane (left) and the fundamental domain $\Sigma$ of the group $\bm\Gamma_6$ (right).}\label{Fig_Fund_Domain}
\end{figure}
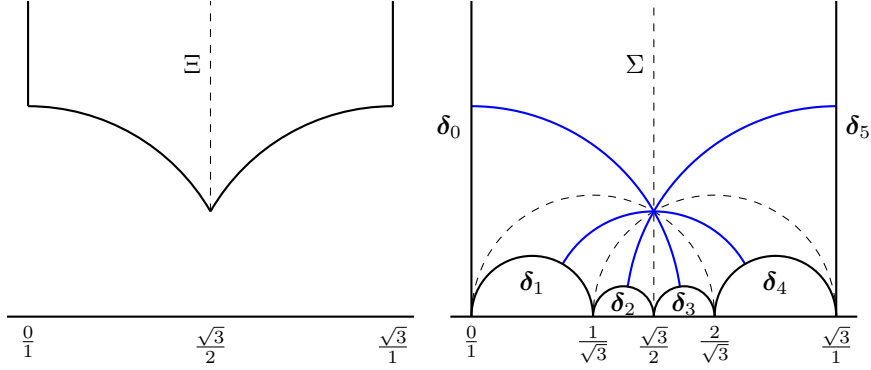

Let $\bm\gamma$ be an oriented geodesic with endpoints $\bm\gamma^+$, $\bm\gamma^-$ in $\hat{\mathbb R} := \mathbb R \cup \{ \infty \} = \partial \mathbb H$.
We will construct a symbolic sequence of $\bm\gamma$, which shows how
$\bm\gamma$ crosses 
 $\mathscr T := \cup_{M \in \bm\Gamma_6} M(\partial \Sigma) = \cup_{M \in \mathbf H_6} M(\bm\delta_0)$.
Suppose that $\bm\gamma^+$, $\bm\gamma^-$ are not elements of $\mathbb Q(\mathbf H_6)$.
Then, 
we can divide $\bm\gamma$ into geodesic segments $ \dots, \bm\gamma_{-1}, \bm\gamma_0, \bm\gamma_1, \bm\gamma_2, \dots $, whose end points are elements of $\mathscr T$, 
along the orientation of $\bm\gamma$.
Let $\bm\gamma_n^+, \bm\gamma_n^- \in \mathscr T$ be the endpoints of the geodesic segment $\bm\gamma_n$. 
Then $\bm\gamma_n^+ = \bm\gamma_{n+1}^-$ for all $n \in \mathbb Z$.
%
Note that 
the choice of $\bm\gamma_0$ is arbitrary.
After we fix $\bm\gamma_0$, 
there exists a unique $g_n \in \bm\Gamma_6$ 
for each $n \in \mathbb Z$, 
such that $\bm\gamma_n$ belongs to the ideal hexagon $g_n(\Sigma)$.
By choosing $M_n = g_nR^{i_n} \in \mathbf H_6$ for $\bm\gamma_n^- \in g_n (\bm\delta_{i_n}) = g_n R^{i_n}(\bm\delta_0)$, we have 
\begin{equation}\label{dn}
\bm\gamma_n^- \in M_n(\bm\delta_{0}), \qquad \bm\gamma_n^+ \in M_n(\bm\delta_{d_n}) \qquad \text{ for some } d_n \in \{1,2,3,4,5\}.
\end{equation}
Define
\begin{equation}\label{defN}
N_d = R^d S \qquad \text{ for } d \in\{ 1,2,3,4,5\}.
\end{equation}
Since $\bm\gamma_n^+ \in M_n (\bm\delta_{d_n}) = M_n R^{d_n}(\bm\delta_{0}) = M_n R^{d_n}S (\bm\delta_{0}) = M_n N_{d_n} (\bm\delta_{0})$
and $\bm\gamma_n^+ = \bm\gamma_{n+1}^- \in  M_{n+1} (\bm\delta_{0})$,
we have 
\begin{equation}\label{MN}
M_{n+1} = M_n R^{d_n}S = M_n N_{d_n}.
\end{equation}
The oriented geodesic $\bm\gamma$ intersects $M_{n+1} (\bm\delta_0)$ as it passes from $M_{n}(\Sigma)$ to $M_{n+1} (\Sigma)$.  
Since 
$M_{n}(\Sigma) = M_{n+1} N_{d_n}^{-1} (\Sigma)
= M_{n+1} S R^{-d_n} (\Sigma) = M_{n+1} ( S (\Sigma))$, 
the geodesic
$M_{n+1}^{-1} (\bm\gamma)$ intersects $\bm\delta_0$ as it passes from $S(\Sigma)$ to $\Sigma$
and
$M_{n+1}^{-1}(\bm\gamma^+) \in (0, \infty)$, $M_{n+1}^{-1}(\bm\gamma^-) \in (-\infty, 0)$.
Therefore, we have 
\begin{equation}\label{end_pts}
\bm\gamma^+ \in M_{n+1} \cdot (0,\infty), \qquad
\bm\gamma^- \in M_{n+1} \cdot (-\infty,0).
\end{equation}

For each oriented geodesic $\bm\gamma$, we associate a doubly infinite sequence $(d_n)_{n \in \mathbb Z}$ with an equivalence relation 
$(a_n)_{n\in\mathbb{Z}}\sim (b_n)_{n\in\mathbb{Z}}$ if there exists an integer $k\in\mathbb{Z}$ such that $a_{n+k}=b_n$ for all $n\in\mathbb{N}$. 
We call an equivalence class a \emph{bi-infinite $\mathbf H_6$-sequence} and an element in the equivalence class is called a \emph{section} of the bi-infinite $\mathbf H_6$-sequence.
Note that the bi-infinite $\mathbf H_6$-sequence of an oriented geodesic is invariant under the $\mathrm{PSL}_2(\mathbb R)$ action of $\mathbf H_6$.

On the other hand, let $(d_n)_{n \in \mathbb Z} \in \{ 1,2,3,4,5\}^{\mathbb Z}$ be given and choose $M_0 = I$. 
Then we have an oriented geodesic $\bm\gamma$ with end points $\bm\gamma^+ \in (0,\infty)$, $\bm\gamma^- \in (-\infty,0)$.  
We write
\begin{equation}\label{endpt_exp}
[ d_{0}, d_{1}, d_{2} \dots] := \bm\gamma^+ \in (0,\infty)\quad 
\text{and}
\quad
[ \dots, d_{-2}, d_{-1} ] := \bm\gamma^- \in (-\infty,0).
\end{equation}
Note that, if $(d_n)_{n \ge 0}$ is given, then $M_n$ is determined for all $n \ge 0$ from $M_0 = I$.
In general, we have 
\begin{equation}\label{gen_exp}
\bm\gamma^+ = M_m \cdot [ d_m, d_{m+1}, d_{m+2} \dots]
\quad 
\text{and}
\quad 
\bm\gamma^- = M_m \cdot [ \dots, d_{m-3}, d_{m-2}, d_{m-1} ].
\end{equation}
We remark that a similar digit expansion was introduced and its convergence was established in Proposition 15 of \cite{CCGW22}.
This also gives the convergence of \eqref{endpt_exp}.

We will show that 
\begin{equation}\label{rev_exp}
[ \dots, d_{-2}, d_{-1} ] = - [ d_{-1}, d_{-2}, \dots ].
\end{equation}
Note from \eqref{defN} that 
\begin{equation}
\label{N_matrix}
\begin{gathered}
N_1 = \begin{pmatrix}
1 & 0 \\ \sqrt 3 & 1
\end{pmatrix}, \
N_2 = \begin{pmatrix}
\sqrt 3 & 1\\ 2 & \sqrt 3
\end{pmatrix}, \
N_3 = \begin{pmatrix}
2 & \sqrt 3 \\ \sqrt 3 & 2
\end{pmatrix}, \\
N_4 = \begin{pmatrix}
\sqrt 3 &2  \\ 1 & \sqrt 3 
\end{pmatrix}, \
N_5 = \begin{pmatrix}
1 & \sqrt 3 \\ 0 & 1
\end{pmatrix}.
\end{gathered}
\end{equation}
Let
$$
J := \begin{pmatrix} 0 & 1 \\ 1 & 0 \end{pmatrix}, \quad 
H := \begin{pmatrix} -1 & 0 \\ 0 & 1 \end{pmatrix}
\quad \text{ and } \quad 
d^\vee := 6 - d \quad \text{ for } \ d \in \{ 1,2,\dots,5\}. 
$$
Then, we have
\begin{equation}\label{HJ}
S = HJ = JH, \qquad N_d = J N_{d^\vee} J, \qquad
N_d^{-1}= H N_d H = S N_{d^\vee} S.
\end{equation}
%
%
If $M_0 = I$, then for each $k \ge 0$, by \eqref{MN}, \eqref{end_pts} and \eqref{HJ}, we have 
$$
M_{k+1} = N_{d_0} N_{d_1} \cdots N_{d_{k}}, \quad M_{-k} = \left( N_{d_{-1}} \right)^{-1} \cdots \left( N_{d_{-k}} \right)^{-1} = H N_{d_{-1}} \cdots N_{d_{-k}} H
$$
and 
\begin{equation}
\label{expansion}
\begin{cases}
\bm\gamma^+ 
\in N_{d_0} N_{d_1} \cdots N_{d_{k}} \cdot (0, \infty),  \\
\bm\gamma^- 
\in H N_{d_{-1}} \cdots N_{d_{-k}} H \cdot (-\infty, 0) 
= - N_{d_{-1}} \cdots N_{d_{-k}} \cdot (0, \infty) 
\end{cases}
\end{equation}
This proves \eqref{rev_exp}.

Next, we note that the relations \eqref{HJ} give,
for all $k \ge 1$, that
$$ [ d_1^\vee, d_2^\vee, \dots ] 
\in J N_{d_0} N_{d_1} \cdots N_{d_{k}} J  \cdot (0, \infty) = \left( \frac{1}{N_{d_0} \cdots N_{d_{k}} \cdot \infty}, \frac{1}{N_{d_0} \cdots N_{d_{k}} \cdot 0} \right).
$$
Therefore,
\begin{equation}\label{check_exp}
[ d_1^\vee, d_2^\vee, \dots ] = \frac{1}{[ d_{1}, d_{2}, \dots ]}.
\end{equation}

Suppose that $\bm\gamma^*$ is the geodesic of reversed orientation of $\bm\gamma$ with a section of the associated bi-infinite $\mathbf H_6$-sequence $(d_n)_{n\in \mathbb Z}$.
Let $\bm\gamma^*_n = \bm\gamma_{-n}$.
Then, by \eqref{dn}, we have 
\[
(\bm\gamma^*_n)^- = \bm\gamma_{-n}^+ \in M_{-n}(\bm\delta_{d_{-n}})= M_{-n}R^{d_{-n}} (\bm\delta_0)
\]
and
\[
(\bm\gamma^*_n)^+ = \bm\gamma_{-n}^- \in M_{-n}(\bm\delta_{0}) = M_{-n}R^{d_{-n}} (\bm\delta_{d^\vee_{-n}}).
\]
We have deduced that the reversed orientation geodesic $\bm\gamma^*$ associated to the bi-infinite $\mathbf H_6$-sequence of the section $(d_{-n}^\vee)_{n \in \mathbb Z}$.
Moreover, if $M_0 = I$, then 
\begin{equation*}
(\bm\gamma^*_1)^- = \bm\gamma_{-1}^+ = \bm\gamma_{0}^- \in \bm\delta_{0} = S (\bm\delta_0), \qquad 
\bm\gamma^*_1 \subset S(\Sigma).
\end{equation*}
By \eqref{gen_exp} and \eqref{check_exp}, we have
\[
(\bm\gamma^*)^+ 
= S \cdot [ d_{-1}^\vee, d_{-2}^\vee , \dots ] = [ \dots,  d_{-2}, d_{-1}] = \bm\gamma^-
\]
and
\[
(\bm\gamma^*)^- 
= S \cdot [ \dots, d_2^\vee, d_{1}^\vee, d_{0}^\vee] = [ d_0, d_{1}, d_{2}, \dots ]= \bm\gamma^+.
\]

\begin{definition}
We write a section of a bi-infinite $\mathbf{H}_6$-sequence as $P^*|Q$ for where $P, Q \in \{ 1, 2, 3, 4, 5 \}^{\mathbb{N}}$. 
(Here, $P^*$ is understood as an element of $\{ 1, 2, 3, 4, 5\}^{\mathbb{Z}_{\le0}}$.)
For a section $P^{*}|Q$ of $A$, we define
\[
L(P^{*}|Q):= [Q] - [P^{*}] = [Q ] + [P].
\]
\label{def:L_from_bi-sequence}
\end{definition}

\begin{definition}\label{def_M}
Define $\mathcal M(A)$ to be the maximum of two supremum values as follows:
$$ 
\mathcal M(A) = \sup_{P^{*}|Q} \max \{ L(P^{*}|Q), L({(P^{\vee})}^{*}|Q^{\vee}) \}, 
$$
where $P^{*}|Q$ runs through all sections of $A$.
If $\mathcal{M}(A) = L(P^*|Q)$, we say that $P^*|Q$ is an \emph{extremal} section of $A$.
The Markoff spectrum $\mathscr{M}(\mathbf H_6)$ is defined to be the set of all Markoff numbers $\mathcal M(A)$.
\end{definition}
For a bi-infinite $\mathbf{H}_6$-sequence $A$, it is easy to check 
\begin{equation}\label{lem:A_conjugates}
\mathcal{M}(A) =
\mathcal{M}(A^{\vee}) =
\mathcal{M}(A^*) =
\mathcal{M}((A^*)^{\vee}).
\end{equation}

\begin{definition}\label{def_L}
We define $\mathcal L(A)$ as the maximum of two limit superior values as follows:
$$ 
\mathcal L(A) : = \limsup_{P^{*}|Q} \max \{  L(P^{*}|Q), L({(P^{\vee})}^{*}|Q^{\vee}) \}
$$
where $P^{*}|Q$ runs over all sections of $A$.
The Lagrange spectrum $\mathscr{L}(\mathbf H_6)$ is defined to be the set of all Lagrange numbers $\mathcal L(A)$.
\end{definition}


\begin{prop}\label{prop}
Let $\xi, \eta \in \hat{\mathbb R} \setminus \mathbb Q(\mathbf H_6)$ be distinct
and let $A$ be the bi-infinite $\mathbf H_6$-sequence associated to the oriented geodesic $\bm\gamma$ with the endpoints $\bm\gamma^+ := \xi$,  $\bm\gamma^- := \eta$.
Then we have 
$$
\sup_{g\in \mathbf H_6} \left| g(\xi) - g(\eta) \right| = 
\sup_{P^*|Q} \max \{ L(P^*|Q) , L((P^\vee)^*|Q^\vee)  \}
$$
where the supremum is taken over all sections $P^*|Q$ of $A$.
\end{prop}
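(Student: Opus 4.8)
The plan is to interpret both sides geometrically. Since $\xi,\eta\notin\mathbb Q(\mathbf H_6)$, for every $g\in\mathbf H_6$ the endpoints $g(\xi),g(\eta)$ are finite, so $g(\bm\gamma)$ is a Euclidean semicircle of diameter $|g(\xi)-g(\eta)|$, and this diameter equals twice the largest imaginary part attained on $g(\bm\gamma)$. Hence
\[
\sup_{g\in\mathbf H_6}|g(\xi)-g(\eta)|=2\sup\{\operatorname{Im}(w):\ w\in\mathbf H_6\cdot\bm\gamma\},
\]
where $\mathbf H_6\cdot\bm\gamma\subset\mathbb H$ is the union of the $\mathbf H_6$-translates of $\bm\gamma$. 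On the other side, for the section $P^{*}|Q$ obtained by re-centering at the $m$-th segment (so that $M_m\in\mathbf H_6$, $Q=(d_m,d_{m+1},\dots)$, $P=(d_{m-1},d_{m-2},\dots)$), equations \eqref{gen_exp} and \eqref{rev_exp} give $M_m^{-1}(\xi)=[Q]>0$ and $M_m^{-1}(\eta)=-[P]<0$, so $L(P^{*}|Q)=[Q]+[P]$ is exactly the diameter of $\bm\delta^{(m)}:=M_m^{-1}(\bm\gamma)$; and, because $S(z)=-1/z$, \eqref{check_exp} gives $L((P^{\vee})^{*}|Q^{\vee})=\frac1{[Q]}+\frac1{[P]}$, which is the diameter of $S\bm\delta^{(m)}$. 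Thus the right-hand side equals $\sup_m\max\{\operatorname{diam}\bm\delta^{(m)},\operatorname{diam}S\bm\delta^{(m)}\}$.

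The inequality ``$\mathrm{RHS}\le\mathrm{LHS}$'' is then immediate, since $M_m^{-1}$ and $SM_m^{-1}$ lie in $\mathbf H_6$, so $\bm\delta^{(m)}$ and $S\bm\delta^{(m)}$ lie in $\mathbf H_6\cdot\bm\gamma$ and their diameters are $\le\sup_g|g(\xi)-g(\eta)|$.

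For ``$\mathrm{LHS}\le\mathrm{RHS}$'' I would first establish the combinatorial fact (Claim A): $g(\bm\gamma)$ crosses $\bm\delta_0$ if and only if $g\in\{M_n^{-1},SM_n^{-1}:n\in\mathbb Z\}$. Indeed $\mathscr T$ is $\mathbf H_6$-invariant, the walls actually crossed by $\bm\gamma$ are precisely $\{M_n(\bm\delta_0):n\in\mathbb Z\}$ — one between each pair of consecutive segments, since $M_{n+1}(\bm\delta_0)=M_n(\bm\delta_{d_n})$ — and the involution $E_0=S$ generates $\operatorname{Stab}_{\mathbf H_6}(\bm\delta_0)=\{I,S\}$; for such $g$, $|g(\xi)-g(\eta)|$ equals $L(P_n^{*}|Q_n)$ or $L((P_n^{\vee})^{*}|Q_n^{\vee})$, hence is $\le\mathrm{RHS}$. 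The same conclusion holds, via the $T$-conjugate of Claim A and the fact that $T$ preserves imaginary parts, when $g(\bm\gamma)$ crosses the wall $T(\bm\delta_0)=\{\operatorname{Re}=\sqrt3\}$. Now take any $w\in\mathbf H_6\cdot\bm\gamma$ and replace it by an $\mathbf H_6$-orbit point of maximal imaginary part; after a horizontal $T$-translation (which does not change $\operatorname{Im}$) this point lies in the Ford fundamental domain $\overline\Xi$, so it suffices to bound $\operatorname{Im}(w)$ for $w\in\overline\Xi\cap(\mathbf H_6\cdot\bm\gamma)$ with $\operatorname{Im}(w)$ maximal in its fibre. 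Let $\delta\in\mathbf H_6\cdot\bm\gamma$ be the geodesic through $w$. If $w$ lies on $\bm\delta_0$ or on $\{\operatorname{Re}=\sqrt3\}$ we are done by the previous remark; otherwise $w$ lies in the open $\bm\Gamma_6$-tile $\Sigma$, so $\delta=R^{j}\bm\delta^{(n_0)}$ for some $j\in\{0,\dots,5\}$ and $n_0\in\mathbb Z$ because $\operatorname{Stab}_{\mathbf H_6}(\Sigma)=\langle R\rangle$. Using the explicit shape of $\Xi$ together with $N_d=R^dS$, $SM_{n+1}^{-1}=R^{\,d_n^{\vee}}M_n^{-1}$ and $R^{\,d_{n-1}}SM_n^{-1}=M_{n-1}^{-1}$, one checks case by case on $j$ that the top of $R^{j}\bm\delta^{(n_0)}$ — and hence $\operatorname{Im}(w)$ — does not exceed the height of some re-centered geodesic $\bm\delta^{(m)}$ or $S\bm\delta^{(m)}$ with $m$ near $n_0$: for $j\in\{0,d_{n_0}^{\vee}\}$ the geodesic is literally $\bm\delta^{(n_0)}$ or $S\bm\delta^{(n_0+1)}$, while the remaining values of $j$ require a direct geometric argument. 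Taking the supremum over $w$ yields $\mathrm{LHS}\le\mathrm{RHS}$, and the proposition follows.

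The crux is this last case analysis. The obstruction is intrinsic: imaginary parts are preserved by the parabolic translations $T^{k}$ (and by reflection in $\bm\delta_0$) but not by the whole group, so a geodesic of the orbit running high over $\Sigma$ yet disjoint from $\bm\delta_0$ cannot be moved to a ``standard position'' by any height-preserving element, and one must show directly that its excursion is already dominated by that of a neighbouring re-centering $\bm\delta^{(m)}$ or $S\bm\delta^{(m)}$. Everything else is routine manipulation of the expansions and identities set up in Section~\ref{Sec:Romik}.
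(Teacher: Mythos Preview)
Your direction $\mathrm{RHS}\le\mathrm{LHS}$ is correct and coincides with the paper's: both simply compute $M_m^{-1}(\xi)-M_m^{-1}(\eta)$ and $SM_m^{-1}(\xi)-SM_m^{-1}(\eta)$ from \eqref{gen_exp}, \eqref{rev_exp}, \eqref{check_exp}.

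For $\mathrm{LHS}\le\mathrm{RHS}$ there is a genuine gap. You locate a highest orbit point $w\in\overline\Xi$ on a translate $\delta=R^{j}\bm\delta^{(n_0)}$ and then assert that for $j\notin\{0,d_{n_0}^{\vee}\}$ ``a direct geometric argument'' bounds the apex of $\delta$ by that of a nearby $\bm\delta^{(m)}$ or $S\bm\delta^{(m)}$. That is precisely the statement to be proved; you give no indication how to carry it out, and since $R$ is elliptic there is no formal reason why rotating $\bm\delta^{(n_0)}$ cannot raise its apex. All of the difficulty in your scheme is concentrated in this unproven step.

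The paper bypasses the case analysis entirely with one observation you are missing. For any section,
\[
L(P^{*}|Q)\cdot L((P^{\vee})^{*}|Q^{\vee})
=\big([P]+[Q]\big)\big([P]^{-1}+[Q]^{-1}\big)
=2+\frac{[P]}{[Q]}+\frac{[Q]}{[P]}\ge 4,
\]
so $\max\{L(P^{*}|Q),L((P^{\vee})^{*}|Q^{\vee})\}\ge 2$ and hence $\mathrm{RHS}\ge 2>\sqrt3=\lambda_6$. Now take any $g\in\mathbf H_6$. If $|g(\xi)-g(\eta)|<\sqrt3$ then already $|g(\xi)-g(\eta)|<\mathrm{RHS}$. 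If $|g(\xi)-g(\eta)|\ge\sqrt3$ then some integer multiple $m\sqrt3$ lies strictly between $g(\xi)$ and $g(\eta)$, so $T^{-m}g(\bm\gamma)$ straddles $0$; since $T^{-m}$ is a horizontal translation it preserves $|g(\xi)-g(\eta)|$, and a translate of $\bm\gamma$ crossing $\bm\delta_0$ has (by invariance of the coding) endpoints $[Q']$, $-[P']$ for some section $(P')^{*}|Q'$ of $A$, or the reverse orientation. Thus $|g(\xi)-g(\eta)|$ equals some $L((P')^{*}|Q')$ or $L(((P')^{\vee})^{*}|(Q')^{\vee})$, hence is $\le\mathrm{RHS}$.

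Incidentally, the same bound $\mathrm{RHS}\ge2$ would also rescue your approach: combined with $\mathrm{LHS}\ge\mathrm{RHS}$ it forces $\operatorname{Im}(w)\ge1$, and a semicircle whose apex lies in $\overline\Xi$ with imaginary part $\ge1$ must have an endpoint outside $[0,\sqrt3]$ (otherwise $2\operatorname{Im}(w)\le\sqrt3$), so it already crosses $\bm\delta_0$ or $T(\bm\delta_0)$ and your Claim~A applies with no case split on $j$. The missing idea is the AM--GM inequality, not a tile-by-tile analysis.
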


\begin{proof}
Let $P^*|Q$ be a section of $A$ with $Q = (d_m, d_{m+1}, \dots )$ and $P^* = ( \dots, d_{m-2}, d_{m-1})$.
Then by \eqref{gen_exp} and \eqref{check_exp}, we have
\[
M_m^{-1} \cdot \xi - M_m^{-1} \cdot \eta = M_m^{-1} \cdot \bm\gamma^{+} - M_m^{-1} \cdot \bm\gamma^- = [Q] - ( - [P]) = [Q] + [P]
\]
and
\[
SM_m^{-1} \cdot \xi - SM_m^{-1} \cdot \eta = SM_m^{-1} \cdot \bm\gamma^{+} - SM_m^{-1} \cdot \bm\gamma^- = S\cdot [Q] - S \cdot (-[P]) = -[Q^\vee] - [P^\vee].
\]
Therefore,
$$
\sup_{g\in \mathbf H_6} \left| g(\xi) - g(\eta) \right| \ge
\sup_{P^*|Q} \max \{ L(P^*|Q) , L((P^\vee)^*|Q^\vee)  \}.
$$
Let $[P], [Q] >0$.
Since  
$([P]+[Q])([P]^{-1} + [Q]^{-1}) = 2 + \frac{[P]}{[Q]} + \frac{[Q]}{[P]} \ge 4$,
we have 
$$
L(P^*|Q) = [P] + [Q] \ge 2 \text{ and } L({(P^{\vee})}^{*}|Q^{\vee}) = [P^\vee] + [Q^\vee] = [P]^{-1} + [Q]^{-1} \ge 2.
$$
If $\left| g(\xi) - g(\eta) \right| \ge \sqrt 2$,
then $g(\eta) < m\sqrt 2 < g(\xi)$ or $g(\xi) < m\sqrt 2 < g(\eta)$ for some $m$.
Then, we have 
$$
T^{-m} g(\eta) < 0 < T^{-m}g(\xi) \quad \text{ or } \quad T^{-m}g(\xi) < 0 < T^{-m}g(\eta).
$$
Let $\tilde{\bm\gamma} = T^{-m}g(\bm\gamma)$.
If $\tilde{\bm\gamma}^+ > 0 > \tilde{\bm\gamma}^-$, then $|g(\xi)-g(\eta)|= 
\tilde{\bm\gamma}^+ - \tilde{\bm\gamma}^-
= L(P^*|Q)$ for a section $P^*|Q$ of $A$.
If $\tilde{\bm\gamma}^+ < 0 < \tilde{\bm\gamma}^-$, then $|g(\xi)-g(\eta)|= 
\tilde{\bm\gamma}^- - \tilde{\bm\gamma}^+
= L((P^\vee)^*|Q^\vee)$ for a section $P^*|Q$ of $A$.
Hence, we have
\[
\sup_{g\in \mathbf H_6} \left| g(\xi) - g(\eta) \right| =
\sup_{P^*|Q} \max \{ L(P^*|Q), L((P^\vee)^*|Q^\vee) \}.
\]
\end{proof}
Let $A$ be a bi-infinite $\mathbf H_6$-sequence with a section $P^*|Q$. 
We associate an oriented geodesic $\bm\gamma_{P^*|Q}$ with end points $[P^*]$ and $[Q]$.
The corresponding indefinite quadratic form $f_{P^*|Q}$ is given by
$$
f_{P^*|Q} = (x + [P]y)(x-[Q]y).
$$
\begin{prop}
\label{prop:M_and_L_from_sequences}
For a bi-infinite $\mathbf H_6$-sequence $A$ with a section $P^*|Q$, we have 
$$
\mathcal M(A) = m_{\mathbf H_6}(f_{P^*|Q}) 
\quad  \text{ and } \quad 
\mathcal L (A) = 
L_{\mathbf H_6}([Q]).
$$
\end{prop}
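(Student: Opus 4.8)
The plan is to derive both equalities from Proposition~\ref{prop} by means of a single elementary matrix identity. First I would dispose of the Markoff part. Expanding $f_{P^*|Q}(x,y)=(x+[P]y)(x-[Q]y)=x^2+([P]-[Q])xy-[P][Q]y^2$ gives $\Delta(f_{P^*|Q})=([P]-[Q])^2+4[P][Q]=([P]+[Q])^2$, so $\sqrt{\Delta(f_{P^*|Q})}=[P]+[Q]=L(P^*|Q)=\bm\gamma^+-\bm\gamma^-$, where $\bm\gamma=\bm\gamma_{P^*|Q}$ has endpoints $\bm\gamma^+=[Q]>0$ and $\bm\gamma^-=[P^*]=-[P]<0$ by \eqref{rev_exp}; since $A$ is bi-infinite these endpoints are not in $\mathbb Q(\mathbf H_6)$, so Proposition~\ref{prop} applies with $\xi=\bm\gamma^+$, $\eta=\bm\gamma^-$. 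For $g=\begin{psmallmatrix}a&b\\c&d\end{psmallmatrix}\in\mathbf H_6$ the defining relation $f(g)=f(a(g),c(g))$ gives $f_{P^*|Q}(g)=(a-c\bm\gamma^-)(a-c\bm\gamma^+)$, and using $ad-bc=1$ a direct computation gives
\[
g^{-1}(\bm\gamma^+)-g^{-1}(\bm\gamma^-)=\frac{\bm\gamma^+-\bm\gamma^-}{(a-c\bm\gamma^+)(a-c\bm\gamma^-)}=\frac{\bm\gamma^+-\bm\gamma^-}{f_{P^*|Q}(g)},
\]
so that $\sqrt{\Delta(f_{P^*|Q})}/|f_{P^*|Q}(g)|=\bigl|g^{-1}(\bm\gamma^+)-g^{-1}(\bm\gamma^-)\bigr|$. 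Taking the supremum over $g$ and reindexing by $h=g^{-1}$ (a bijection of $\mathbf H_6$ onto itself), Proposition~\ref{prop} together with Definition~\ref{def_M} yields $m_{\mathbf H_6}(f_{P^*|Q})=\sup_{h\in\mathbf H_6}|h(\bm\gamma^+)-h(\bm\gamma^-)|=\mathcal M(A)$.

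For the Lagrange part I would start from the $\eta=\infty$ specialization of the same identity, namely $\bigl(c(g)^2|\bm\gamma^+-g(\infty)|\bigr)^{-1}=\bigl|g^{-1}(\infty)-g^{-1}(\bm\gamma^+)\bigr|$ for every $g\in\mathbf H_6$; reindexing $h=g^{-1}$ this gives
\[
L_{\mathbf H_6}(\bm\gamma^+)=\limsup_{h\in\mathbf H_6}\bigl|h(\infty)-h(\bm\gamma^+)\bigr|.
\]
(This is the analytic form of the remark in Section~\ref{Sec:Romik} that $L_{\mathbf H_6}(\xi)$ is a $\limsup$ of heights of the geodesic $(\infty,\xi)$; in particular $L_{\mathbf H_6}$ is constant on $\mathbf H_6$-orbits, so $L_{\mathbf H_6}([Q])$ does not depend on the chosen section.) The remaining task is to match this limit superior with $\mathcal L(A)$, and here the plan is to run the argument of Proposition~\ref{prop} one crossing at a time. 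Because $\bm\gamma^+\notin\mathbb Q(\mathbf H_6)$, the geodesics $(\infty,\bm\gamma^+)$ and $\bm\gamma=(\bm\gamma^-,\bm\gamma^+)$ share the forward endpoint $\bm\gamma^+$ and are therefore forward-asymptotic, so their cutting sequences with $\mathscr T$ agree from some stage onward. At the $n$-th crossing, translating the configuration by a suitable $T^{-m}M_n^{-1}$ so that the two endpoints straddle $0$ (exactly as in Proposition~\ref{prop}), the value of $|h(\infty)-h(\bm\gamma^+)|$ realized at that stage equals $[Q_n]+[\widetilde P_n]$ or its $\vee$-dual $[Q_n^\vee]+[\widetilde P_n^\vee]$, where $Q_n=(d_n,d_{n+1},\dots)$ is the forward tail of $A$ and $\widetilde P_n$ is the finite word of past digits of $(\infty,\bm\gamma^+)$ at that stage.

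These past digits coincide with $d_{n-1},d_{n-2},\dots$ down to a depth tending to $\infty$ with $n$, and $[\,\cdot\,]$ depends on its argument in a uniformly contracting way, so $[\widetilde P_n]-[P_n]\to0$, where $[P_n]$ uses the full past $d_{n-1},d_{n-2},\dots$ of $\bm\gamma$. Passing to the limit superior over $n$ then gives
\[
L_{\mathbf H_6}(\bm\gamma^+)=\limsup_{n\to\infty}\max\bigl\{L(P_n^*|Q_n),\ L((P_n^\vee)^*|Q_n^\vee)\bigr\}=\mathcal L(A),
\]
the right-hand side being the limit superior over the sections of $A$ obtained by sliding the divider forward, which is Definition~\ref{def_L}.

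The Markoff half is a short formal manipulation once Proposition~\ref{prop} is available, so I expect the main obstacle to lie in the Lagrange half: one must make precise that the $\limsup$ over $g\in\mathbf H_6$ defining $L_{\mathbf H_6}(\bm\gamma^+)$ is genuinely realized along the deep crossings of $(\infty,\bm\gamma^+)$ — the role played by the supremum in Proposition~\ref{prop} — and that replacing the truncated past word of $(\infty,\bm\gamma^+)$ by the honest past of $\bm\gamma$ perturbs the relevant quantities only by $o(1)$. As in the proof of Proposition~\ref{prop}, one also has to keep track throughout of the dichotomy between $L(P^*|Q)$ and its $\vee$-dual, according to which of the two regions adjacent to $\bm\delta_0$ the translated geodesic enters; this bookkeeping is routine but cannot be skipped.
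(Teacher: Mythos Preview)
Your approach is correct and matches the paper's: both halves rest on the same elementary identity $\sqrt{\Delta(f)}/|f(g)|=|g^{-1}(\xi)-g^{-1}(\eta)|$ (with $\eta=\infty$ for the Lagrange part) followed by an appeal to Proposition~\ref{prop} and Definitions~\ref{def_M},~\ref{def_L}. In fact you supply more detail than the paper does for the Lagrange half --- after deriving $L_{\mathbf H_6}(\xi)=\limsup_{g}|g^{-1}(\xi)-g^{-1}(\infty)|$ the paper simply writes ``This finishes the proof for $\mathcal L(A)$,'' leaving the asymptotic matching of cutting sequences (your discussion of $\widetilde P_n$ versus $P_n$) implicit.
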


\begin{proof}
Let $f(x,y) = ax^2 + bxy + cy^2 = a(x-\xi y)(x-\eta y)$.
Then $\Delta(f) = a^2 (\xi - \eta)^2$.
For 
$$
g = \begin{pmatrix} x & z \\ y & w \end{pmatrix} \in \mathbf H_6, \qquad 
\begin{pmatrix} \alpha \\ \beta \end{pmatrix} = g^{-1} \begin{pmatrix} \xi \\ 1 \end{pmatrix}, \qquad
\begin{pmatrix} \tilde\alpha \\ \tilde\beta \end{pmatrix} = g^{-1} \begin{pmatrix} \eta \\ 1 \end{pmatrix},
$$
we have 
\begin{align*}
\frac{\sqrt{\Delta(f)}}{|f(g)|}
&= \frac{ | \xi - \eta| }{|x-\xi y| \cdot |x-\eta y|}
= \left |\frac{ \begin{vmatrix} \xi & \eta \\ 1 & 1 \end {vmatrix} }{ \begin{vmatrix} x & \xi \\ y & 1 \end {vmatrix} \cdot \begin{vmatrix} x & \eta \\ y & 1 \end {vmatrix} } \right|
= \left | \frac{\begin{vmatrix} \alpha & \tilde\alpha \\ \beta & \tilde\beta \end {vmatrix}}{ \begin{vmatrix} 1 & \alpha \\ 0 & \beta \end{vmatrix} \cdot \begin{vmatrix} 1 & \tilde\alpha \\ 0 & \tilde\beta \end{vmatrix}}  \right| 
\\
&= \left | \frac{\alpha}{\beta} - \frac{\tilde\alpha}{\tilde\beta}  \right|
= \left | g^{-1} (\xi) - g^{-1} (\eta) \right|.
\end{align*}
Recall that $f(g):= f(g\begin{psmallmatrix}
    1 \\ 0
\end{psmallmatrix})$ (cf.~\eqref{def_fg}).
Therefore, we have 
\[
m_{\mathbf H_6}(f_{P^*|Q}) =
\sup_{g\in \mathbf H_6} \frac{\sqrt{\Delta(f_{P^*|Q})}}{|f_{P^*|Q}(g)|}
= \sup_{g\in \mathbf H_6} \left| g^{-1}([Q]) - g^{-1}(-[P]) \right|.
\]
Applying Definition~\ref{def_M} and Proposition~\ref{prop}, 
we obtain $\mathcal M(A) = m_{\mathbf H_6}(f_{P^*|Q})$.

For $\mathcal{L}(A)$, we notice that, when 
$g = \left(\begin{smallmatrix} x & z \\ y & w \end{smallmatrix}\right)$,
we have
\begin{align*}
y(g)^2 \cdot \left| \xi - \frac{x(g)}{y(g)} \right|
&= y(g)^2 \cdot \left| g(g^{-1}(\xi)) - \frac{x(g)}{y(g)} \right|
= y^2 \cdot \left| \frac{x g^{-1}(\xi) + z}{y g^{-1}(\xi) + w} - \frac xy \right| \\
&= \left| \frac{ xw - yz }{g^{-1}(\xi) + w/y } \right| 
= \frac{1}{\left|g^{-1}(\xi) - g^{-1}(\infty)\right|}.
\end{align*}
Therefore, we have (cf.~\eqref{def_LG})
\[
L_{\mathbf{H}_6}(\xi) = \limsup_{g \in \mathbf{H}_6} \left| g^{-1}(\xi) - g^{-1}(\infty) \right|.
\]
This gives $\mathcal L (A) = 
L_{\mathbf H_6}([Q]).$
%
\end{proof}




The upshot of Proposition~\ref{prop:M_and_L_from_sequences},
together with Definitions~\ref{def:L_from_bi-sequence} and \ref{def_M},
is that we can and will compute $\mathscr{M}(\mathbf{H}_6)$ and $\mathscr{L}(\mathbf{H}_6)$ using bi-infinite sequences in $\{1, 2, 3, 4, 5\}$. 
As a consequence, we have the following theorems,
whose classical counterparts are Theorem 8 in Chapter 1 and Theorem 1 in Chapter 3 of \cite{CF89}.



\begin{thm}\label{MarkoffClosed}
The Markoff spectrum $\mathscr M(\mathbf H_6)$ is closed.
\end{thm}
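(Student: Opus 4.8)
The plan is to show that $\mathscr M(\mathbf H_6)$ is closed by a standard compactness/diagonalization argument on the space of bi-infinite sequences, exactly in the spirit of the classical proof that the Markoff spectrum is closed (as in \cite{Bom07}) and its adaptation in \cite{KS22}. First I would set up the symbolic framework: each value $\mathcal M(A)$ is realized as a supremum over all sections $P^*|Q$ of $A$ of the quantity $\max\{L(P^*|Q), L((P^\vee)^*|Q^\vee)\}$, and by \eqref{lem:A_conjugates} we may replace $A$ by any of its conjugates $A^\vee, A^*, (A^*)^\vee$ without changing the Markoff value. The key point is that $L(P^*|Q) = [P] + [Q]$ is a continuous function of the two one-sided sequences $P, Q \in \{1,\dots,5\}^{\mathbb N}$ in the product topology, since $[d_1, d_2, \dots]$ depends continuously (indeed, with exponential contraction governed by the hyperbolicity of the matrices $N_d$) on the digit sequence, and the value is uniformly bounded (each $[Q] \le [5,5,5,\dots]$ and similarly for $[P]$, so $L \le 2[5,5,\dots] < \infty$).

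Next, suppose $\mu_k = \mathcal M(A_k) \in \mathscr M(\mathbf H_6)$ with $\mu_k \to \mu$. For each $k$, choose a section $P_k^*|Q_k$ of $A_k$ (after possibly passing to a conjugate, so we need not worry about which of the two terms in the $\max$ dominates — arrange, say, that $L(P_k^*|Q_k) \ge \mathcal M(A_k) - 1/k$). The doubly infinite digit sequence underlying this section lives in the compact space $\{1,\dots,5\}^{\mathbb Z}$; by a diagonal argument extract a subsequence along which the digits converge, coordinate by centered coordinate, to a bi-infinite sequence $A_\infty$ with distinguished section $P_\infty^*|Q_\infty$. By continuity of $L$ on finitely many coordinates and the uniform exponential tail bound, $L(P_k^*|Q_k) \to L(P_\infty^*|Q_\infty)$, hence $\mathcal M(A_\infty) \ge L(P_\infty^*|Q_\infty) = \lim_k L(P_k^*|Q_k) = \mu$. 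For the reverse inequality $\mathcal M(A_\infty) \le \mu$, I would argue that any section $P^*|Q$ of $A_\infty$ agrees with the corresponding centered window of infinitely many $A_k$ on arbitrarily long central blocks; since the value $L$ depends on the tails only through an exponentially small correction, $L(P^*|Q) \le \limsup_k \mathcal M(A_k) + o(1) = \mu$, and the same for the $\vee$-conjugate term. Taking the supremum over all sections of $A_\infty$ gives $\mathcal M(A_\infty) = \mu$, so $\mu \in \mathscr M(\mathbf H_6)$.

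The main obstacle, and the place where the argument needs care rather than a one-line citation, is the reverse inequality $\mathcal M(A_\infty)\le\mu$: a section of the limit sequence $A_\infty$ may look locally like sections of many different $A_k$'s, and one must ensure that the supremum over \emph{all} sections of $A_\infty$ does not exceed $\mu$ even though each individual $A_k$ only contributes near its own optimizing section. The standard resolution is the quantitative continuity estimate: if $P^*|Q$ and $\tilde P^*|\tilde Q$ agree on the central $2N+1$ digits, then $|L(P^*|Q) - L(\tilde P^*|\tilde Q)| \le C\rho^N$ for constants $C>0$, $\rho \in (0,1)$ depending only on $\{N_1,\dots,N_5\}$ (this is immediate from the uniform contraction of the projective action of products $N_{d_1}\cdots N_{d_n}$ on the positive interval $(0,\infty)$, which also underlies the well-definedness of $[\,\cdot\,]$ in \eqref{endpt_exp}). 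Granting this estimate — which is the genuinely technical ingredient, though entirely analogous to the classical setting and to \cite{KS22} — the two inequalities combine to give $\mu = \mathcal M(A_\infty) \in \mathscr M(\mathbf H_6)$, completing the proof that $\mathscr M(\mathbf H_6)$ is closed.
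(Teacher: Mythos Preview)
Your plan is exactly what the paper intends: the paper gives no proof of this theorem, only the sentence ``By the similar argument of \cite{Bom07} and \cite{KS22}, we have the following theorems,'' and the compactness/diagonalization argument on $\{1,\dots,5\}^{\mathbb Z}$ that you outline is precisely that argument.

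There is, however, one technical slip you should repair. You claim $L$ is uniformly bounded because ``each $[Q] \le [5,5,5,\dots]$,'' but from \eqref{N_matrix} the matrix $N_5 = \begin{psmallmatrix}1&\sqrt3\\0&1\end{psmallmatrix}$ acts by $x\mapsto x+\sqrt3$, so $[5^\infty]=\infty$; hence $L$ is \emph{not} bounded on all of $\{1,\dots,5\}^{\mathbb N}\times\{1,\dots,5\}^{\mathbb N}$, and your uniform estimate $|L(P^*|Q)-L(\tilde P^*|\tilde Q)|\le C\rho^N$ fails as stated when the agreeing central block contains a long run of $5$'s on one side. The standard fix does not change the architecture of your proof: since $\mu_k\to\mu<\infty$ and any section with $Q$ (or $P$) beginning $5^m\cdots$ already has $L\ge m\sqrt3$, while by \eqref{check_exp} a section with $Q$ beginning $1^m\cdots$ has $L((P^\vee)^*|Q^\vee)\ge m\sqrt3$, the sequences $A_k$ and hence the limit $A_\infty$ contain no run $5^{M+1}$ or $1^{M+1}$ for some $M=M(\mu)$. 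On the shift-invariant compact subset of $\{1,\dots,5\}^{\mathbb Z}$ with these two words forbidden, the map $Q\mapsto[Q]$ is genuinely bounded and the exponential continuity estimate holds, after which the rest of your argument goes through verbatim.
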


\begin{thm}
\label{thm:Lagrange_subset_Markoff}
The Lagrange spectrum of $\mathbf{H}_6$ is contained in the Markoff spectrum of $\mathbf{H}_6$. In other words, $\mathscr{L}(\mathbf H_6) \subset \mathscr{M}(\mathbf H_6)$.
\end{thm}
When $\{1, 2, 3, 4, 5 \}$ is equipped
with the discrete topology, 
the product space $\{ 1, 2, 3, 4, 5 \}^{\mathbb Z}$ 
becomes compact by Tychonoff's theorem.
Then it is possible to prove the two theorems above by similar arguments to those in \cite{Bom07} and \cite{KS22}. We omit the details.


\section{Maximal gaps of the Markoff spectrum}\label{Sec:Gaps}

For any finite sequence $w$, we denote by $w^k$ the $k$ consecutive concatenation $w\cdots w$. 
Similarly, $w^{\infty} \in \{ 1, 2, 3, 4, 5 \}^{\mathbb{N}}$ will mean the infinite sequence with period $w$
and ${}^{\infty}w^{\infty} \in \{1, 2, 3, 4, 5\}^{\mathbb{Z}}$ is the bi-infinite sequence with period $w$.
For example, 
\begin{align*}
(234)^{\infty} &= 234234234\cdots, \\
{}^{\infty}(234)^{\infty} &= \cdots 234234234\cdots, \\
15(234)^{\infty} &= 15234234234\cdots.
\end{align*}
This notational convention requires some care when used for \emph{reversed} infinite sequences, namely, for elements of $\{1, 2, 3, 4, 5\}^{\mathbb{Z}_{\le0}}$.
For a finite sequence $w$, we write ${}^{\infty}w$ 
to mean the (reversed) infinite sequence $\cdots w^*w^*$, not $\cdots ww$.
For example, ${}^{\infty}(234) = \cdots 234234$.
As a result, ${}^{\infty}w | w^{\infty}$ is a section of ${}^{\infty} w^{\infty}$.

Recall that an open interval $(a, b) \subset \mathbb{R}$ is called a \emph{maximal gap} of $\mathscr{M}(\mathbf{H}_6)$ if $a, b \in \mathscr{M}(\mathbf{H}_6)$ and $(a,b)\cap\mathscr{M}(\mathbf{H}_6)$ is an empty set.

\begin{thm}\label{thm:three_gaps1}
The two intervals
\begin{align*}
   \left( \frac{\sqrt{143}}{5}, \sqrt7 \right) &= (2.391\dots, 2.645\dots),\\
   \left( \sqrt7, \frac{13\sqrt3 + 13\sqrt7 + \sqrt{143}}{26} \right) &= (2.645\dots, 2.648\dots), \\
\end{align*}
are maximal gaps of $\mathscr{M}(\mathbf{H}_6)$.
Furthermore, 
$\frac{\sqrt{143}}{5}$
and $\frac{13\sqrt3 + 13\sqrt7 + \sqrt{143}}{26}$
are accumulation points of $\mathscr{M}(\mathbf{H}_6)$.
\end{thm}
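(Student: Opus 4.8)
The plan is to translate the gap assertions into purely combinatorial statements about the function $L(P^*|Q) = [Q] + [P]$ on sections of bi-infinite $\mathbf H_6$-sequences, using Definition~\ref{def_M}, and then to carry out a finite case analysis on the possible local patterns. The three boundary values should correspond to explicit periodic (or eventually periodic) sequences: one checks that $\sqrt 7 = \mathcal M\bigl({}^\infty 3^\infty\bigr)$ or some similarly short periodic word, that $\tfrac{\sqrt{143}}{5}$ arises from a word like ${}^\infty(3\,4^{2k}2)$-type expressions limiting to a fixed periodic pattern (this is the "accumulation point" claim on the left), and that $\tfrac{13\sqrt3+13\sqrt7+\sqrt{143}}{26}$ arises similarly from the right. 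So the first step is to identify the extremal sequences realizing each of $\tfrac{\sqrt{143}}{5}$, $\sqrt 7$, and $\tfrac{13\sqrt3+13\sqrt7+\sqrt{143}}{26}$, and verify by direct computation with the matrices $N_1,\dots,N_5$ from \eqref{N_matrix} (solving the appropriate fixed-point equations $N_w\cdot x = x$ for the periodic tails) that these values indeed lie in $\mathscr M(\mathbf H_6)$. The closedness of $\mathscr M(\mathbf H_6)$ from Theorem~\ref{MarkoffClosed} then guarantees the endpoints are in the spectrum, so it remains only to show the two open intervals are disjoint from $\mathscr M(\mathbf H_6)$.

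For the gap $\bigl(\tfrac{\sqrt{143}}{5},\sqrt 7\bigr)$: suppose $A$ is a bi-infinite sequence with $\tfrac{\sqrt{143}}{5} < \mathcal M(A) < \sqrt 7$. The inequality $\mathcal M(A) < \sqrt 7$ forces strong restrictions on which digits and which finite blocks may appear in any section of $A$ — for instance certain digits or adjacent pairs (like a $1$ or a $5$, or two consecutive large digits) would immediately push some $L(P^*|Q)$ or $L((P^\vee)^*|Q^\vee)$ above $\sqrt 7$, by monotonicity of $[\cdot]$ in the digit sequence (the standard alternating comparison: increasing an even-indexed digit increases $[Q]$, etc.). After ruling those out, only a small alphabet of admissible blocks survives; I would organize these into a finite directed graph of allowed transitions. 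Then the condition $\mathcal M(A) > \tfrac{\sqrt{143}}{5}$ forces at least one "large" excursion to appear; but any admissible bi-infinite path in the surviving graph either is eventually periodic with $\mathcal M \le \tfrac{\sqrt{143}}{5}$, or contains a forbidden sub-block that pushes $\mathcal M$ up to at least $\sqrt 7$. This dichotomy, proved by finitely many explicit estimates on $[\,\cdot\,]$-values of the relevant words, closes the first gap. The second gap $\bigl(\sqrt 7, \tfrac{13\sqrt3+13\sqrt7+\sqrt{143}}{26}\bigr)$ is handled the same way but with a narrower window: $\mathcal M(A) < \tfrac{13\sqrt3+13\sqrt7+\sqrt{143}}{26}$ is only slightly above $\sqrt 7$, so the admissible-block analysis is even more rigid, and one shows that the only sequences with $\mathcal M(A)$ in the half-open range $[\sqrt 7, \text{upper bound})$ actually have $\mathcal M(A) = \sqrt 7$ exactly; anything forced strictly above $\sqrt 7$ jumps past the upper endpoint.

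For the accumulation-point claims, I would exhibit explicit one-parameter families of (eventually) periodic sequences $A_k$ — e.g. words built from a fixed core together with a block repeated $k$ times, analogous to the $B^k A^3 W_k$ constructions in Section~\ref{Sec:dim} — with $\mathcal M(A_k)$ strictly monotone and converging to $\tfrac{\sqrt{143}}{5}$ from above (and similarly a family converging to $\tfrac{13\sqrt3+13\sqrt7+\sqrt{143}}{26}$ from above or below as needed). Computing $\lim_k \mathcal M(A_k)$ is again a fixed-point matrix computation: the repeated block contributes a matrix power whose action on the relevant interval converges to multiplication by the attracting fixed point of that block, giving the limiting $[\,\cdot\,]$-value in closed form. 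The main obstacle, as usual in Markoff-spectrum gap theorems, is the combinatorial bookkeeping in the second step: correctly enumerating the admissible blocks under the $\sqrt 7$ (resp. upper-endpoint) constraint and verifying that the finite transition graph has no "unexpected" bi-infinite paths landing inside the gap — this requires care because the relevant $[\,\cdot\,]$ comparisons depend on deep tails of the sequence, so one must track enough digits (and use the $\vee$ and reversal symmetries \eqref{lem:A_conjugates}) to make each comparison decisive. The endpoint identifications and the accumulation-point families are comparatively routine algebra once the extremal words are guessed correctly.
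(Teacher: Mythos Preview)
Your overall strategy---identify the extremal sequences realizing the endpoints, then rule out the open intervals by a forbidden-block analysis, then exhibit explicit families for the accumulation claims---is exactly the paper's approach. However, several of your concrete guesses are wrong in ways that would derail the execution.

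First, the monotonicity is not alternating. All five matrices $N_1,\dots,N_5$ in \eqref{N_matrix} have nonnegative entries, so the M\"obius map $x\mapsto N_w\cdot x$ is order-preserving on $(0,\infty)$ for \emph{every} finite word $w$ (this is Lemma~\ref{lem:monotone_lemma}), and by \eqref{eq:monoton_leading_digit} increasing any digit at any depth increases $[Q]$. There is no even/odd alternation as in classical continued fractions; if you carry that intuition into the case analysis you will get the comparisons backwards.

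Second, your endpoint identifications are off. One computes $[3^\infty]=1$, so $\mathcal M({}^\infty 3^\infty)=2$, not $\sqrt 7$. The value $\sqrt 7$ is realized by ${}^\infty(51)^\infty$ and by ${}^\infty(4224)^\infty$; the latter is the one that drives the combinatorics, since the actual forbidden-block argument (Lemmas~\ref{lem:contain1_or_5}--\ref{lem:contain242}) first eliminates $1$ and $5$ entirely (modulo the single exception ${}^\infty(51)^\infty$), then shows that $\mathcal M(A)>\sqrt{143}/5$ forces a $44$ or $22$, and that under $\mathcal M(A)<\sqrt 7$ any $44$ must extend step by step to the full periodic word ${}^\infty(4422)^\infty$. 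The second gap is closed by showing that the only new escape route, $44\to 4423$, already gives $\mathcal M(A)\ge \tfrac{13\sqrt3+13\sqrt7+\sqrt{143}}{26}$.

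Third, the accumulation to $\sqrt{143}/5$ cannot be from above: that point is the \emph{left} endpoint of a gap, so any approximating sequence must approach from below. The paper uses ${}^\infty 3\,(43)^{k+l}\,3^\infty$, which by Lemma~\ref{lem:contain24_or_42} all have Markoff value strictly below $\sqrt{143}/5$ and converge up to it. For the upper endpoint the paper uses $A_k={}^\infty(4224)\,4(23)^k\,3^\infty$.
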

\begin{rem}
It is also possible to show that 
\[
   \left( \frac{2\sqrt{506}}{19}, \frac{2\sqrt{2803333}}{1405}\right) = (2.367\dots, 2.383\dots)
\]
is a maximal gap.
Its proof is based on a similar strategy to that of Theorem~\ref{thm:three_gaps1} but much more involved.
For brevity, we omit the proof.
\end{rem}

\begin{thm}\label{thm:longest_gap}
The interval $(\sqrt{143}/5, \sqrt7)$ is the longest maximal gap in $\mathscr{M}(\mathbf{H}_6)$.
\end{thm}
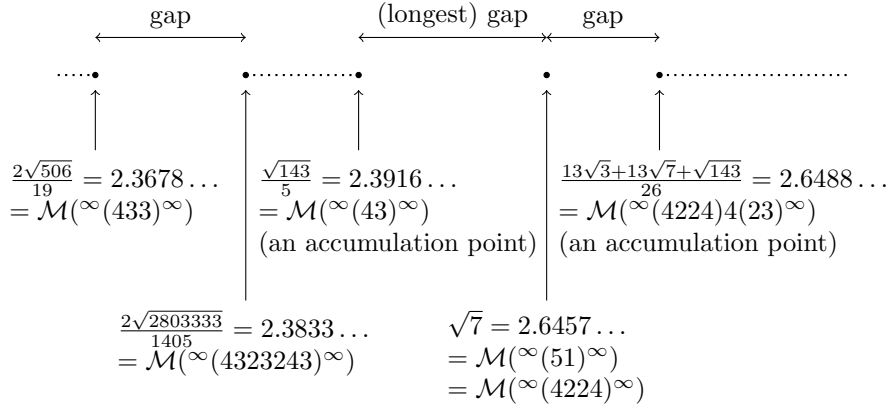
\begin{figure}
\begin{center}
\begin{tikzpicture}[every text node part/.style={align=left}]
  \draw[thick, dotted] (1, 0) -- (1.5, 0);
  \draw[thick, dotted] (3.5, 0) -- (5, 0);
  \draw[thick, dotted] (9, 0) -- (11.5, 0);
  \filldraw (1.5,0) circle (1pt);
  \filldraw (3.5,0) circle (1pt);
  \filldraw (5,0) circle (1pt);
  \filldraw (7.5,0) circle (1pt);
  \filldraw (9,0) circle (1pt);
  
  \draw[thin, <->] (1.5, 0.5) -- node[midway, above]{gap} (3.5, 0.5);
  \draw[thin, <->] (5, 0.5) -- node[midway, above]{(longest) gap} (7.5, 0.5);
  \draw[thin, <->] (7.5, 0.5) -- node[midway, above]{gap} (9, 0.5);

  \draw[thin, ->] (3.5, -3) node[below] {%
    $\frac{2\sqrt{2803333}}{1405} =2.3833\dots$ 
    \\ 
    $=\mathcal{M}({}^{\infty}(4323243)^{\infty})%
  $} -- (3.5, -0.2); 
  \draw[thin, ->] (1.5, -1) node[below, xshift = 8pt] {%
    $\frac{2\sqrt{506}}{19} 
    = 2.3678\dots$ \\
  $=\mathcal{M}({}^{\infty}(433)^{\infty})$ 
} -- (1.5, -0.2);

  \draw[thin, ->] (7.5, -3) node[below] {%
    $\sqrt{7} =2.6457\dots$ 
    \\ 
    $=\mathcal{M}({}^{\infty}(51)^{\infty}) $ \\
    $=\mathcal{M}({}^{\infty}(4224)^{\infty})$ 
} -- (7.5, -0.2); 
  \draw[thin, ->] (5, -1) node[below, xshift=15pt] {%
    $\frac{\sqrt{143}}{5} 
    = 2.3916\dots$ \\
  $=\mathcal{M}({}^{\infty}(43)^{\infty})$ \\
  (an accumulation point)
} -- (5, -0.2);

\draw[thin, ->] (9, -1) node[below, xshift=25pt] {%
  $\frac{13\sqrt{3} +13\sqrt7 + \sqrt{143}}{26} 
    = 2.6488\dots$ \\
  $=\mathcal{M}({}^{\infty}(4224)4(23)^{\infty})$  \\
  (an accumulation point)
} -- (9, -0.2);
\end{tikzpicture}
\end{center}
    \caption{Maximal gaps and their boundary points in $\mathscr{M}(\mathbf{H}_6)$. (This figure is not drawn to scale.)}
    \label{fig:H6_with_gaps}
\end{figure}
Three maximal gaps of $\mathscr{M}(\mathbf{H}_6)$ and their boundary points are shown in Figure~\ref{fig:H6_with_gaps}.

\subsection{Computational preliminaries}
Before we begin proving Theorems~\ref{thm:three_gaps1} and \ref{thm:longest_gap}, we give some computational lemmas.
\begin{lem}
\label{lem:w_infty_formula}
Suppose that $w = (d_1, \dots, d_k)$ is a finite sequence and let
\[
N_w:=
N_{d_1} \cdots N_{d_k} 
=
\begin{pmatrix}
a_w & b_w \\ c_w & d_w
\end{pmatrix}.
\]
Also, write $\Delta_w = \mathrm{Tr}(N_w)^2 - 4$.
Then
\[
[ w^{\infty} ] = 
\frac{a_w - d_w + \sqrt{\Delta_w}}{2c_w}.
\]
\end{lem}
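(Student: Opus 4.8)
The plan is to recognize that $[w^\infty]$ is, by its very definition via \eqref{endpt_exp} and \eqref{gen_exp}, a fixed point of the linear fractional transformation induced by $N_w$. Indeed, the sequence $w^\infty = (d_1,\dots,d_k,d_1,\dots,d_k,\dots)$ has the property that shifting it by $k$ digits returns the same sequence, and since $M_k = N_{d_1}\cdots N_{d_k} = N_w$ when $M_0 = I$, the identity \eqref{gen_exp} applied with $m = k$ gives
\[
[w^\infty] = \bm\gamma^+ = M_k \cdot [d_k, d_{k+1},\dots] = N_w \cdot [w^\infty].
\]
So the first step is to write down this fixed-point equation $N_w \cdot x = x$, i.e. $\dfrac{a_w x + b_w}{c_w x + d_w} = x$, which clears to the quadratic $c_w x^2 + (d_w - a_w) x - b_w = 0$.

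Next I would solve this quadratic by the usual formula, obtaining
\[
x = \frac{a_w - d_w \pm \sqrt{(d_w - a_w)^2 + 4 b_w c_w}}{2 c_w}.
\]
Here I would simplify the discriminant: since $\det N_w = a_w d_w - b_w c_w = 1$ (each $N_d$ has determinant $1$, being a product of $S$ and powers of $R$, all in $\mathrm{PSL}_2$), we have $b_w c_w = a_w d_w - 1$, so
\[
(d_w - a_w)^2 + 4 b_w c_w = a_w^2 - 2 a_w d_w + d_w^2 + 4 a_w d_w - 4 = (a_w + d_w)^2 - 4 = \mathrm{Tr}(N_w)^2 - 4 = \Delta_w,
\]
matching the claimed expression. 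This pins down the formula up to the choice of sign.

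The last step — and the only place requiring a small argument rather than pure algebra — is to justify the plus sign, i.e. that $[w^\infty]$ is the larger of the two roots. Here I would invoke that $[w^\infty] \in (0,\infty)$ by \eqref{endpt_exp}, together with the expansivity of the maps $N_d$ on $(0,\infty)$ established in Section~\ref{Sec:Romik}: the geodesic $\bm\gamma_{P^*|Q}$ construction shows $N_w$ maps $(0,\infty)$ strictly into itself (in fact the nesting of intervals $N_{d_0}\cdots N_{d_k}\cdot(0,\infty)$ shrinks to the point $[w^\infty]$), so $N_w$ acting on $\hat{\mathbb R}$ is a hyperbolic element whose attracting fixed point lies in $(0,\infty)$ and whose derivative there is $<1$. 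A short computation of the derivative $\frac{d}{dx}\bigl(\frac{a_w x + b_w}{c_w x + d_w}\bigr) = \frac{1}{(c_w x + d_w)^2}$ shows this attracting fixed point is precisely the one with $c_w x + d_w > 1 > 0$ in absolute value comparison, equivalently the root $\frac{a_w - d_w + \sqrt{\Delta_w}}{2c_w}$ when $c_w > 0$ (which holds since each $N_d$ has nonnegative entries and $w$ contains a digit forcing strict positivity). I expect this sign-selection to be the main obstacle: it is conceptually easy but needs to be phrased carefully, and the cleanest route is probably to cite the interval-nesting already set up in \eqref{expansion} rather than re-derive hyperbolicity from scratch.
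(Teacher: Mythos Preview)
Your proposal is correct and takes essentially the same approach as the paper: both recognize $[w^\infty]$ as a fixed point of the M\"obius action of $N_w$ (the paper phrases this as $\begin{psmallmatrix} x \\ 1 \end{psmallmatrix}$ being an eigenvector, then verifies it directly for the larger eigenvalue $\lambda = \tfrac{a_w + d_w + \sqrt{\Delta_w}}{2}$, while you solve the quadratic $c_w x^2 + (d_w - a_w)x - b_w = 0$ and simplify the discriminant via $\det N_w = 1$). Your treatment of the sign choice is in fact more thorough than the paper's, which does not address it explicitly; that said, your dynamical argument is heavier than necessary---since $b_w, c_w > 0$ (all $N_d$ have nonnegative entries and the product is easily seen to have strictly positive off-diagonal entries unless $w = 5^k$), the product of the two roots is $-b_w/c_w < 0$, so exactly one root lies in $(0,\infty)$, and $[w^\infty] \in (0,\infty)$ by \eqref{endpt_exp} selects it.
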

\begin{proof}
Define
    \[
    x =  \frac{a_w - d_w + \sqrt{\Delta_w}}{2c_w}.
    \]
    It is enough to show that the vector $\displaystyle \begin{pmatrix} x \\ 1 \end{pmatrix}$
    is an eigenvector of $N_w$. 
    An elementary calculation shows that 
    \[
    \lambda =  \frac{a_w + d_w + \sqrt{\Delta_w}}{2}
    \]
    is an eigenvalue of $N_w$.
    Then
    \[
    N_w
    \begin{pmatrix}
        \lambda - d_w \\ c_w
    \end{pmatrix}
    =\lambda
    \begin{pmatrix}
        a_w - \lambda^{-1} \\ c_w
    \end{pmatrix}
    =\lambda
    \begin{pmatrix}
        \lambda - d_w \\ c_w
    \end{pmatrix}.
    \]
    Finally, we have $x = (\lambda - d_w)/c_w$.
    This completes the proof.
\end{proof}
Using \eqref{N_matrix} and Lemma~\ref{lem:w_infty_formula}, we can compute $[P]$ for any eventually periodic sequence $P$. This will be used tacitly throughout this section.
This also gives an easy recipe for finding $\mathcal{M}(A)$ for any periodic $A$. 
See Table~\ref{tab:Markoff_numbers_examples} for Markoff numbers of the boundary points of maximal gaps.
\begin{table}
    \centering
    \[
    \begin{array}{lll}
    \toprule
    A & \text{Extremal Section of $A$} & \mathcal{M}(A) \\
    \midrule
{}^{\infty}(43)^{\infty} &
{}^{\infty}(43)|(43)^{\infty}
& \frac{\sqrt{143}}{5} \\
{}^{\infty}(4224)^{\infty},
{}^{\infty}(51)^{\infty}
&
{}^{\infty}(4224)|(4224)^{\infty},
{}^{\infty}(51)|(51)^{\infty}
& \sqrt{7} \\
{}^{\infty}(4224)4(23)^{\infty} & 
{}^{\infty}(4224)|4(23)^{\infty} 
& 
   \frac{13\sqrt3 + 13\sqrt7 + \sqrt{143}}{26} \\
{}^{\infty}(433)^{\infty} &
{}^{\infty}(433)| (433)^{\infty} 
&
\frac{2\sqrt{506}}{19} \\
{}^{\infty}(4323243)^{\infty} & 
{}^{\infty}(4323243)|(4323243)^{\infty} 
&
\frac{2\sqrt{2803333}}{1405} \\
    \bottomrule
    \end{array}
    \]
    \caption{Some Markoff numbers $\mathcal{M}(A)$ that are boundary points of maximal gaps.}
    \label{tab:Markoff_numbers_examples}
\end{table}

\begin{lem}[cf.~Proposition 30 in \cite{CCGW22}]
\label{lem:monotone_lemma}
Suppose that $P$ and $P'$ are infinite sequences and $w$ is a finite sequence.
We have 
$[w, P] \le [w, P']$
if and only if $[P] \le [P']$.
\end{lem}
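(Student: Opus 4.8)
The plan is to reduce the statement to the elementary fact that a Möbius transformation of positive determinant is strictly increasing on any interval that avoids its pole. The main point to set up is the observation, implicit in Section~\ref{Sec:Romik}, that for \emph{any} infinite sequence $Q \in \{1,2,3,4,5\}^{\mathbb N}$ one has $[Q] \in (0,\infty)$ (this is \eqref{endpt_exp} together with \eqref{expansion}), and that for a finite sequence $w = (d_1,\dots,d_k)$ the concatenation obeys the composition rule
\[
[w, P] = N_w \cdot [P], \qquad N_w = N_{d_1}\cdots N_{d_k},
\]
which again follows by relabelling indices in \eqref{expansion}.

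Given this, I would argue as follows. By \eqref{N_matrix}, each $N_d$ has nonnegative entries, hence so does the product $N_w = \begin{psmallmatrix} a_w & b_w \\ c_w & d_w\end{psmallmatrix}$; in particular $c_w, d_w \ge 0$, and they are not both zero since $\det N_w = 1$. Therefore $c_w x + d_w > 0$ for every $x \in (0,\infty)$, so the map $x \mapsto N_w \cdot x = (a_w x + b_w)/(c_w x + d_w)$ is well defined and differentiable on $(0,\infty)$ with derivative $(c_w x + d_w)^{-2} > 0$. Hence $x \mapsto N_w\cdot x$ is \emph{strictly} increasing on $(0,\infty)$.

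To conclude, note that $[P], [P'] \in (0,\infty)$, so strict monotonicity of $x \mapsto N_w\cdot x$ on $(0,\infty)$ gives the chain of equivalences
\[
[P] \le [P'] \iff N_w\cdot[P] \le N_w\cdot[P'] \iff [w,P] \le [w,P'],
\]
which is exactly the assertion. (This is the $\mathbf H_6$-analogue of Proposition~30 in \cite{CCGW22}.)

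There is essentially no obstacle here beyond bookkeeping: the only things to verify carefully are the composition rule $[w,P]=N_w\cdot[P]$ and the containment $[Q]\in(0,\infty)$ for every infinite sequence $Q$, both of which are immediate from the definitions in Section~\ref{Sec:Romik}; the positivity of $c_wx+d_w$ on $(0,\infty)$ is where the nonnegativity of the entries of the $N_d$ gets used.
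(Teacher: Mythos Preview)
Your proof is correct. The paper does not actually prove this lemma; it merely records the statement with a pointer to Proposition~30 of \cite{CCGW22}, and your argument---reducing to the strict monotonicity on $(0,\infty)$ of the M\"obius map $x\mapsto N_w\cdot x$, using that each $N_d$ (hence $N_w$) has nonnegative entries and determinant~$1$---is exactly the standard argument behind that cited result.
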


Next, we prove that, if $P$ is any infinite sequence \emph{containing neither 1 nor 5}, then 
\begin{equation}\label{eq:1_5_forbidden_bound}
[2^{\infty}] 
\le [P]
\le
[4^{\infty}].
\end{equation}
In other words, if we let 
$\mathcal{I} = \{2, 3, 4\}^{\mathbb{N}}$, then
\eqref{eq:1_5_forbidden_bound} holds for any $P\in\mathcal{I}$.
We will construct, say, 
$P_{\mathrm{max}}\in\mathcal{I}$ 
so that  
$[P_{\mathrm{max}}]$ 
is the sup of $\{ [P] \mid P\in\mathcal{I}\}$. 
To do this, let $P_d$ for $d = 1, \dots, 5$ be an (arbitrary) infinite sequence whose leading digit is $d$. 
Notice that 
\begin{equation}\label{eq:monoton_leading_digit}
[P_1] \le 
\frac1{\sqrt3} \le
[P_2] \le 
\frac{\sqrt3}2 \le
[P_3] \le 
\frac2{\sqrt3} \le
[P_4] \le 
\sqrt3 \le
[P_5].
\end{equation}
So, the leading digit of 
$P_{\mathrm{max}}$ 
must be 4, as the digit 5 is forbidden in $\mathcal{I}$.
Applying \eqref{eq:monoton_leading_digit} with Lemma~\ref{lem:monotone_lemma}, the second digit of $P_{\mathrm{max}}$ must be 4 because 5 is forbidden in $\mathcal{I}$.
Repeating this process, we obtain
$P_{\mathrm{max}} = 4^{\infty}$.
A similar argument proves that, if we let 
$P_{\mathrm{min}} = 2^{\infty}$,
$[P_{\mathrm{min}}]$
is the inf of $\{ [P] \mid P\in\mathcal{I}\}$. 
This completes the proof of \eqref{eq:1_5_forbidden_bound}.

This process can be easily modified to give a general algorithm for building
$P_{\mathrm{max}}$ and $P_{\mathrm{min}}$ with a prescribed prefix and forbidden sequences. 
We give a complete statement in the following lemma.
\begin{lem}\label{alg:min_max}
Suppose that we are given a finite sequence $w$ (which we call a \emph{prefix}) and a finite set $\{ w_1, \dots w_k\}$ of finite sequences (which we call \emph{forbidden} sequences).
Let $\mathcal{I}$ be the set of all infinite sequences beginning with $w$ and not containing any of $w_1, \dots, w_k$. 
Then either $\mathcal{I}$ is an empty set or there exist eventually periodic infinite sequences $P_{\mathrm{min}}$ and 
$P_{\mathrm{max}}$ in $\mathcal{I}$ such that
\[
[P_{\mathrm{min}}] \le [P] \le [P_{\mathrm{max}}]
\]
for any $P\in\mathcal{I}$.
\end{lem}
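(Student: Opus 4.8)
The plan is to mimic the greedy ``digit-by-digit'' construction already carried out above for the case $\mathcal{I} = \{2,3,4\}^{\mathbb{N}}$, now keeping track of the prefix $w$ and the forbidden sequences $w_1, \dots, w_k$. First I would reduce to constructing $P_{\mathrm{max}}$ only, since $P_{\mathrm{min}}$ is handled by the symmetric argument (replacing ``$\le$'' by ``$\ge$'' and ``largest admissible digit'' by ``smallest admissible digit'' at every step). Assume $\mathcal{I} \ne \emptyset$. I would build $P_{\mathrm{max}}$ one digit at a time: having fixed a finite prefix $u$ (starting with $u = w$) that can still be extended to an element of $\mathcal{I}$, let $d^\ast$ be the largest digit in $\{1,\dots,5\}$ such that $ud^\ast$ is still a prefix of some element of $\mathcal{I}$ (i.e.\ $ud^\ast$ contains no $w_i$ and can be completed without ever creating a $w_i$), and append $d^\ast$. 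By \eqref{eq:monoton_leading_digit} together with Lemma~\ref{lem:monotone_lemma}, among all infinite completions of $u$ lying in $\mathcal{I}$, those beginning with $ud^\ast$ have the largest values of $[\,\cdot\,]$; iterating, $[P_{\mathrm{max}}] \ge [P]$ for every $P \in \mathcal{I}$.

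The remaining point is that this greedy sequence $P_{\mathrm{max}}$ is \emph{eventually periodic}. For this I would set up a finite automaton: its states record, after reading a prefix $u$, the bounded amount of information needed to decide admissibility of future digits — namely the set of ``active suffixes,'' i.e.\ the suffixes of $u$ that are proper prefixes of some $w_i$ (there are only finitely many such, bounded in terms of $\sum |w_i|$). The greedy rule ``append the largest digit $d^\ast$ that keeps some completion in $\mathcal{I}$ alive'' is a deterministic function of the current state alone (one also needs the state-dependent fact that \emph{some} admissible infinite continuation exists, which, once true, remains true since we never pick a digit that kills all continuations). Hence the sequence of states is eventually periodic, and therefore so is $P_{\mathrm{max}}$. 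The same automaton, with ``largest'' replaced by ``smallest,'' produces an eventually periodic $P_{\mathrm{min}}$.

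I would then assemble the proof in this order: (1) dispose of the case $\mathcal{I}=\emptyset$; (2) define the automaton states and the greedy transition, and verify that from any reachable state admitting an infinite admissible continuation, the greedily chosen digit again leads to such a state — this is where one must be slightly careful, checking that ``largest digit keeping a live continuation'' is well defined precisely because $\mathcal{I}\ne\emptyset$ guarantees at least one live digit; (3) conclude eventual periodicity of $P_{\mathrm{max}}$ (and $P_{\mathrm{min}}$) from finiteness of the state set; (4) prove the extremality inequality $[P_{\mathrm{min}}] \le [P] \le [P_{\mathrm{max}}]$ by the digit-by-digit comparison using \eqref{eq:monoton_leading_digit} and Lemma~\ref{lem:monotone_lemma}, exactly as in the warm-up argument for $\{2,3,4\}^{\mathbb{N}}$ but now respecting the prefix and the forbidden words.

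The main obstacle is step (2)–(3): making the ``finite bookkeeping'' rigorous, i.e.\ identifying the correct finite state space (the active-prefix data of the forbidden words, together with a flag for ``a completion still exists'') and checking that the greedy choice is genuinely state-determined so that the run is eventually periodic. The extremality inequality in step (4), by contrast, is essentially a repetition of the argument already given for $\mathcal{I}=\{2,3,4\}^{\mathbb{N}}$ and should be routine, the only new feature being that one compares two sequences of $\mathcal{I}$ that agree on the prefix $w$ and then diverge.
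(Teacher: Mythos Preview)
Your approach is the same as the paper's: the greedy digit-by-digit construction, justified by \eqref{eq:monoton_leading_digit} and Lemma~\ref{lem:monotone_lemma}, is precisely what the paper has in mind. In fact the paper does not write out a separate proof of this lemma at all; it simply says that the warm-up argument for $\mathcal{I}=\{2,3,4\}^{\mathbb{N}}$ ``can be easily modified'' and then states the lemma. So on the extremality inequality you are exactly in line with the paper, only more explicit.

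Where you go beyond the paper is the eventual periodicity of $P_{\mathrm{max}}$ and $P_{\mathrm{min}}$. The paper leaves this implicit, whereas you supply the finite-automaton argument: the relevant state is the set of suffixes of the current prefix that are proper prefixes of some $w_i$, the greedy rule is a deterministic function of that state (since the set of admissible infinite continuations depends only on it), and finiteness of the state space forces eventual periodicity. This is the correct and standard way to make that step rigorous, and it fills a gap the paper does not address. Your caution in step~(2) about checking that ``some admissible continuation exists'' is state-determined is well placed; once you note that the language of admissible continuations is a function of the state alone, this follows immediately.
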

\begin{lem}\label{lem:contain24_or_42}
Suppose that $A$ is a bi-infinite $\mathbf{H}_6$-sequence that contains neither 1 nor 5.
Then the sequences $22$ and $44$ are forbidden in $A$ if and only if 
$\mathcal{M}(A) \le \frac{\sqrt{143}}{5} = 2.391652\dots$.
\end{lem}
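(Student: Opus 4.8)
The plan is to analyze the value $L(P^*|Q) = [P] + [Q]$ under the constraint that the digits $1,5$ never appear and then to impose or drop the condition that $22$ and $44$ are forbidden. For the ``if'' direction, suppose $22$ and $44$ are both forbidden in $A$, so $A$ is a bi-infinite sequence over $\{2,3,4\}$ in which $2$ and $4$ are never immediately repeated. I would first argue that the extremal section is obtained by making the positive tail $Q$ as large as possible and the negative tail $P$ as large as possible simultaneously; by Lemma~\ref{alg:min_max} applied with prefix conditions and forbidden words $\{1,5,22,44\}$, the largest possible tail is $(43)^{\infty}$ (a leading $4$, then forced alternation gives $4343\cdots$, and Lemma~\ref{lem:monotone_lemma} together with \eqref{eq:monoton_leading_digit} shows this beats every competitor; note one cannot also start with $4$ on the reversed side at the same cut, so the genuine worst case has $Q$ starting $43\cdots$ and $P$ starting $3\cdots$ or vice versa). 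Carrying this out, the supremum of $[P]+[Q]$ over all sections is realized (in the limit, or exactly for the periodic sequence ${}^{\infty}(43)^{\infty}$) by the section ${}^{\infty}(43)\,|\,(43)^{\infty}$, and Lemma~\ref{lem:w_infty_formula} with \eqref{N_matrix} gives $[(43)^{\infty}] + [3(43)^{\infty}]$, whose numerical value one checks equals $\sqrt{143}/5$. One must also check the conjugate quantity $L((P^\vee)^*|Q^\vee) = [P^\vee]+[Q^\vee] = [P]^{-1}+[Q]^{-1}$ does not exceed this, which follows because $d^{\vee}$ swaps the roles of small and large digits ($2^{\vee}=4$, $4^{\vee}=2$, $3^{\vee}=3$), so the forbidden-word and no-$1$-no-$5$ conditions are preserved under $\vee$ and the same bound applies. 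Hence $\mathcal M(A) \le \sqrt{143}/5$.

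For the ``only if'' direction, I would prove the contrapositive: if $22$ (or, by applying $\vee$, $44$) appears in $A$, then $\mathcal M(A) > \sqrt{143}/5$. If $A$ contains $22$, then some section can be written with $Q$ beginning $22\cdots$ or with a block $422$ straddling the cut; using Lemma~\ref{lem:monotone_lemma} and \eqref{eq:1_5_forbidden_bound} one gets a lower bound on the relevant $[P]+[Q]$ by replacing the unknown tails with their extreme admissible values (e.g.\ $4^{\infty}$ on each side) — this produces a concrete number strictly larger than $\sqrt{143}/5$. Concretely I expect the cleanest route is: shift so that the section is $R^*\,|\,22\,S$, bound $[22S] \ge [224^{\infty}]$ and $[R] \ge [2^{\infty}]$ (or a sharper admissible choice), and verify the resulting sum exceeds $2.3917$. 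A symmetric argument with $44$ (or just invoke \eqref{lem:A_conjugates}) finishes it.

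The main obstacle is the ``if'' direction: one must be careful that the sup over all sections of $\max\{L(P^*|Q), L((P^\vee)^*|Q^\vee)\}$ is genuinely controlled, because the two tails $P$ and $Q$ at a given cut are not independent — the cut lies inside the fixed bi-infinite sequence $A$, and forbidding $22,44$ constrains how $P$ and $Q$ fit together across the cut. The honest argument is that whatever $A$ is, each of its tails is an infinite sequence over $\{2,3,4\}$ avoiding $22,44$, hence bounded above by $[(43)^{\infty}]$ and below by $[(34)^{\infty}]$ via Lemma~\ref{alg:min_max}; so $[P]+[Q] \le [3(43)^{\infty}] + [(43)^{\infty}]$ with equality approached only along ${}^{\infty}(43)^{\infty}$, and likewise for the $\vee$-conjugate. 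Verifying that this upper bound equals exactly $\sqrt{143}/5$ and that it is not exceeded by the conjugate term is then a finite computation using Lemma~\ref{lem:w_infty_formula}.
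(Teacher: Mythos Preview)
Your ``if'' direction is correct and matches the paper's argument: since $44$ is forbidden, $P$ and $Q$ cannot both begin with $4$ at any section, so the extremal case is $[Q]\le[(43)^\infty]$ and $[P]\le[(34)^\infty]=[3(43)^\infty]$, summing to $\sqrt{143}/5$; the $\vee$-symmetry you note handles the second term in Definition~\ref{def_M}.

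The ``only if'' direction has a genuine gap. You propose to treat the case where $A$ contains $22$ by taking a section $R^*|22S$ and bounding $L=[R]+[22S]$ from below, but no such section can ever witness $\mathcal M(A)>\sqrt{143}/5$. Any tail beginning with $22$ satisfies $[22S]\le[224^\infty]\approx 0.712$ (your inequality $[22S]\ge[224^\infty]$ is in the wrong direction---set $S=2^\infty$ to see this), and even with the most generous $[R]\le[4^\infty]=\sqrt 2$ one obtains $L\le 2.13$, well below $2.392$. Replacing unknown tails by $4^\infty$ produces upper bounds, not lower bounds. More generally, no cut placed near a $22$ block makes $L(P^*|Q)$ large, because a leading $2$ pins the value below $\sqrt3/2$; the presence of $22$ is only detected through the \emph{conjugate} term $L((P^\vee)^*|Q^\vee)$ in the definition of $\mathcal M(A)$. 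The paper therefore treats $44$ first, places the cut \emph{between} the two $4$'s to obtain the section $P^*4|4Q$, and uses $[4P],[4Q]\ge[42^\infty]$ to conclude $\mathcal M(A)\ge 2[42^\infty]\approx 2.644>\sqrt{143}/5$; the $22$ case then follows from $\mathcal M(A)=\mathcal M(A^\vee)$. You mention \eqref{lem:A_conjugates} only as a shortcut to pass from a (failed) $22$ argument to $44$, but it is the essential mechanism, and the argument must begin with $44$ (or, equivalently, with the $\vee$-branch of $\mathcal M$).
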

\begin{proof}
For any infinite sequence $P$ containing neither 1 nor 5, 
we have $[4P] \ge [4 2^{\infty}]$ from \eqref{eq:1_5_forbidden_bound}.
Therefore, if $A$ contains 44, then
\begin{align*}
\mathcal{M}(A) &\ge L(P^*4|4Q) = [4P] + [4Q] \\
&\ge 
2\cdot [4 2^{\infty}]
=
2
\cdot
\frac{
\sqrt3 \frac1{\sqrt2} + 2
}{
\frac1{\sqrt2} + \sqrt3
}
=2.644146\dots \\
&>\frac{\sqrt{143}}{5}. 
\end{align*}
If $A$ contains $22$, we apply the same process to $A^{\vee}$ to show that
$\mathcal{M}(A) >\frac{\sqrt{143}}{5}$.
This shows that $22$ and $44$ are forbidden in $A$.

Conversely, assume that 22 and 44 are forbidden in $A$.
We will show that $L(P^*|Q)$ for any section $P^*|Q$ is bounded (from above) by 
$\frac{\sqrt{143}}{5}$.
Since $A$ doesn't contain $44$ it is impossible for both $P$ and $Q$ to have $4$ as prefix. 
So $L(P^*|Q)$ is maximized when the prefixes of $P$ and $Q$ are, say, 3 and 4, respectively. 
Since $P$ cannot contain $44$ we see that $[P]$ is bounded by $[3434... ] = [(34)^{\infty}]$. 
The same argument proves $[Q] \le [(43)^{\infty}]$.
Then
\[
[P] \le [(34)^{\infty}]
=
\frac{\sqrt{143}-\sqrt3}{10}
\text{ and }
[Q] \le [(43)^{\infty}]
=\frac{\sqrt{143}+\sqrt3}{10}.
\]
This proves that
\[
\mathcal{M}(A) \le 
\frac{\sqrt{143}-\sqrt3}{10}
+
\frac{\sqrt{143}+\sqrt3}{10}
=\frac{\sqrt{143}}5.
\]
%
\end{proof}
\begin{lem}\label{lem:contain1_or_5}
Let $A$ be a bi-infinite $\mathbf{H}_6$-sequence.
If $A$ contains 1 or 5, then either 
$A = {}^\infty(51)^{\infty}$ 
or 
\[
\mathcal{M}(A)\ge 
\frac{\sqrt5 + 5}{2\sqrt3} +\frac1{\sqrt2}
=
2.79597967852851\dots.
\]
\end{lem}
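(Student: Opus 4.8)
The plan is to exploit the symmetry $\mathcal{M}(A) = \mathcal{M}(A^{\vee})$ from \eqref{lem:A_conjugates} together with the exchange $d \leftrightarrow d^{\vee}$, which swaps $1$ and $5$. So without loss of generality I assume that $A$ contains the digit $5$. Fix a section $P^{*}|Q$ of $A$ with $Q$ beginning with $5$, say $Q = 5 Q'$. Then $[Q] = N_5 \cdot [Q'] = [Q'] + \sqrt{3} \ge \sqrt3$, since $[Q'] \ge 0$ (indeed $[Q'] \ge 1/\sqrt3$ whenever $Q'$ does not begin with $1$). This already gives a lower bound $L(P^*|Q) = [P] + [Q] \ge [P] + \sqrt3$, but to beat $2.79\ldots$ I need to control $[P]$ too, i.e. to understand what precedes the $5$ in $A$.

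First I would run a short case analysis on the digit preceding the occurrence of $5$. If that digit is again $5$ or is $1$, then the section can be chosen as $P^{*}|Q$ with $P$ beginning with $5$ or with $1$; using \eqref{check_exp} and $[5 R] = [R] + \sqrt3$, $[1R] = 1/([5 R^{\vee}]) \ge \dots$, one gets $[P] \ge 1/\sqrt3$ (or better), hence $L(P^*|Q) \ge 1/\sqrt3 + \sqrt3 = 4/\sqrt3 < 2.79\ldots$, which is not yet enough — so these borderline cases need the stronger estimate below. The main point is: since $A \ne {}^{\infty}(51)^{\infty}$, either some digit of $A$ lies in $\{2,3,4\}$, or $A$ contains $55$ or $11$ directly. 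In the first case I would locate a digit $d \in \{2,3,4\}$ as close as possible to an occurrence of $5$, translate by an element of $\mathbf{H}_6$ so that the relevant section has $Q = 5 w$ with $w$ a (possibly empty) block of $1$'s and $5$'s followed by $d\cdots$, and push the value of $[Q]$ upward: each leading $5$ adds $\sqrt3$, and Lemma~\ref{lem:monotone_lemma} lets me replace the tail by its minimum, giving $[Q] \ge N_5 \cdots$ applied to $[d\,2^{\infty}]$ or similar. Meanwhile the digits on the $P$-side, read in reverse via \eqref{rev_exp}, still begin with something in $\{1,5\}$ if $55$ or $11$ occurs, so $[P] \ge 1/\sqrt3$; the extremal configuration turns out to be the one producing the constant $\frac{\sqrt5 + 5}{2\sqrt3} + \frac{1}{\sqrt2}$, which by Lemma~\ref{lem:w_infty_formula} is exactly $\mathcal{M}$ of the bi-infinite sequence with a single $\{1,5\}$-excursion flanked by $2$'s, evaluated at its extremal section ${}^{\infty}(\cdots 2) 5 1 5 \cdots$-type split — I would identify this extremal word explicitly and verify the numerics with Lemma~\ref{lem:w_infty_formula}.

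Concretely, the cleanest route is: (i) reduce to $A$ containing $5$; (ii) show that if $A$ contains the factor $155$, $515$, $551$, or any of $25, 35, 45$ adjacent to a run of $5$'s, then the corresponding section has $L(P^*|Q) \ge \frac{\sqrt5+5}{2\sqrt3} + \frac1{\sqrt2}$, by bounding $[Q]$ below using $N_5$ (and $N_1$ where needed) applied to the worst tail $2^{\infty}$ via \eqref{eq:1_5_forbidden_bound} and Lemma~\ref{lem:monotone_lemma}, and bounding $[P]$ below by $1/\sqrt3$ or $1/\sqrt2$ according to whether $P$ begins with $5$, $1$, or $2$; (iii) observe that the only bi-infinite sequence on $\{1,2,3,4,5\}$ that contains a $5$ but avoids all the factors from step (ii) is ${}^{\infty}(51)^{\infty}$, completing the proof. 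The main obstacle I anticipate is step (ii): there are several adjacency patterns to check, and for each I must make sure the lower bound for $L(P^{*}|Q)$ genuinely clears the stated constant rather than merely clearing $4/\sqrt3$; getting the extremal pattern right (so that equality, not strict inequality, is what defines the threshold) and organizing the cases so that they collectively force ${}^{\infty}(51)^{\infty}$ is the delicate bookkeeping. I expect Lemma~\ref{lem:monotone_lemma} and the explicit matrices $N_1, N_5$ to do essentially all the analytic work once the case division is set up correctly.
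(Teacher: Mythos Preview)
Your overall skeleton (reduce to a $5$, run a case analysis on neighbours, force $A={}^\infty(51)^\infty$) matches the paper, but the concrete plan in step~(ii)--(iii) has a genuine error and a missing idea.

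First, the factor list is wrong. You propose to show that any $A$ containing $515$ satisfies $\mathcal{M}(A)\ge \frac{\sqrt5+5}{2\sqrt3}+\frac1{\sqrt2}$, and then in step~(iii) to conclude that the only sequence avoiding your list is ${}^\infty(51)^\infty$. But ${}^\infty(51)^\infty$ itself contains $515$, and $\mathcal{M}({}^\infty(51)^\infty)=\sqrt7<2.796$, so that implication is simply false. The right two-letter factors to eliminate are $55,54,53,52,45$ (together with their $\vee$- and $*$-images); once all of these are forbidden, every $5$ is followed and preceded by a $1$, and dually for $1$, which forces ${}^\infty(51)^\infty$.

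Second, and more importantly, your proposed bounds are too crude to reach the threshold, and you have not identified the mechanism that fixes this. The paper's proof is \emph{two-stage}. Stage~1 eliminates $55,54,45$ by the easy estimates $[5P]+[5Q]\ge 2\sqrt3$ and $[5P]+[4Q]\ge \sqrt3+\tfrac{2}{\sqrt3}$; by $\vee$-symmetry this also forbids $11,12,21$. Stage~2 then feeds these six forbidden words back into Lemma~\ref{alg:min_max}: with prefix $5$ and forbidden set $\{55,54,45,11,12,21\}$ one obtains the sharp lower bound
\[
[5P]\ \ge\ [5(13)^\infty]\ =\ \frac{\sqrt5+5}{2\sqrt3},
\]
which is what makes the $52$ and $53$ cases go through (for $52$ one pairs this with $[2Q]\ge[2^\infty]=\tfrac1{\sqrt2}$, producing exactly the constant in the statement; for $53$ one pairs it with $[3Q]\ge[(31)^\infty]$). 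Your plan to bound $[Q]$ by ``$N_5$ applied to the worst tail $2^\infty$'' via \eqref{eq:1_5_forbidden_bound} cannot work here, because \eqref{eq:1_5_forbidden_bound} assumes the tail contains neither $1$ nor $5$, which is precisely not the situation; without the Stage~1 constraints you only get $[5P]\ge\sqrt3$ and the resulting bound $\sqrt3+\tfrac1{\sqrt2}\approx 2.44$ falls well short. The bootstrapping through Lemma~\ref{alg:min_max} is the missing ingredient.
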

\begin{proof}
Suppose that $A$ contains 5.
Assume that
\begin{equation}\label{eq:A_smaller_than2750}
\mathcal{M}(A) 
< 
\frac{\sqrt5 + 5}{2\sqrt3} +\frac1{\sqrt2}
=
2.79597967852851\dots.
\end{equation}
We will deduce $A = {}^\infty(51)^{\infty}$ from this.

If the digit 5 in $A$ extends to 55, then \eqref{eq:monoton_leading_digit} gives 
\[
\mathcal{M}(A) \ge L(P^*5|5Q) = [5P] + [5Q] \ge \sqrt3 + \sqrt 3 = 2\sqrt 3.
\]
This contradicts \eqref{eq:A_smaller_than2750}.
Similarly, if $5$ extends to 54, 
\[
\mathcal{M}(A) \ge 
L(P^*5|4Q) = [5 P] + [4 Q]
\ge \sqrt3 + \frac2{\sqrt3}=
2.886751\dots >
\frac{\sqrt5 + 5}{2\sqrt3} +\frac1{\sqrt2}
.
\]
If $5$ extends (to the left) to 45, 
\[
\mathcal{M}(A) \ge L(P^*4|5Q) = [4 P] + [5Q]
\ge 
\frac2{\sqrt3}  + \sqrt3 
>
\frac{\sqrt5 + 5}{2\sqrt3} +\frac1{\sqrt2}
.
\]
Therefore, $55$, $54$ and $45$ are forbidden in $A$. 
Suppose that the digit 5 in $A$ extends to 53. 
We will find a lower bound of
$
L(P^*5|3Q) = [5 P] + [3Q]
$.
Recall that $55$, $54$ and $45$ are forbidden in $A$. 
Applying the same argument to $A^{\vee}$, we see that $11, 12, 21$ are also forbidden in $A$.
We apply Lemma~\ref{alg:min_max} with the prefix 5 and 
$\{55, 54, 45, 11, 12, 21\}$ as forbidden sequences 
and conclude that
\[
[5 P ]\ge [5(13)^{\infty}] = 
\frac{\sqrt5 + 5}{2\sqrt3}.
\]
Also, from Lemma~\ref{alg:min_max} with the prefix 3 instead, we obtain 
\[
    [3Q]\ge [(31)^{\infty}] = 
    \frac{\sqrt5 + 1}{2\sqrt3}.
\]
So,
\begin{align*}
\mathcal{M}(A) &\ge 
L(P^*5|3Q) 
\ge
[5(13)^{\infty}] + 
[(31)^{\infty}]
\\
&=
\frac{\sqrt5 + 5}{2\sqrt3}
+
    \frac{\sqrt5 + 1}{2\sqrt3}
=\sqrt3 + \frac{\sqrt{15}}3 \\
&>
\frac{\sqrt5 + 5}{2\sqrt3} +\frac1{\sqrt2},
\end{align*}
which contradicts \eqref{eq:A_smaller_than2750}.
Next, suppose that the digit 5 in $A$ extends to $52$.
This time, we apply Lemma~\ref{alg:min_max} with the prefix 2, 
yielding $[2Q] \ge [2^{\infty}] = 1/\sqrt2$.
So,
\[
\mathcal{M}(A) \ge 
L(P^*5|2Q) = [5 P] + [2 Q] \ge 
[5(13)^{\infty}] + 
[2^{\infty}]
=
\frac{\sqrt5 + 5}{2\sqrt3}
+\frac1{\sqrt2}
\]
contradicting \eqref{eq:A_smaller_than2750}.

In conclusion, the digit 5 in $A$ must extend to $51$.
In particular, $A^{\vee}$ contains 15.
We have $\mathcal{M}(A^{\vee}) = \mathcal{M}(A)$ 
(see \eqref{lem:A_conjugates}) 
and therefore we can apply the same argument to $A^{\vee}$ and conclude that $15$ in $A^{\vee}$ must extend to $151$.
Continuing this way, we see that the (original) 5 in $A$ must extend to $(51)^{\infty}$.
Again, we can apply the same argument to $A^*$, which results in
$A = {}^{\infty}(51)^{\infty}$.
This completes the proof of the lemma.
\end{proof}
\begin{lem}\label{lem:423_424}
Suppose that $A$ is a bi-infinite $\mathbf{H}_6$-sequence containing neither 1 nor 5. 
If $A$ contains $443$,
\[
\mathcal{M}(A) \ge 
\frac{
2\sqrt2 + \sqrt3}{\sqrt6 + 1
}
+
\frac{
7\sqrt2 + 4\sqrt3}{3\sqrt6 + 5
}
=2.68480\dots .
\]
If $A$ contains $444$,
\[
\mathcal{M}(A) \ge 
\frac{
2\sqrt2 + \sqrt3}{\sqrt6 + 1
}
+
\frac{
5\sqrt2 + 8\sqrt3
}{
2(\sqrt6 + 5)
}
=2.72669\dots . 
\]
\end{lem}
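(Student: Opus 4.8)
The plan is to pick, in each case, a single explicit section $P^{*}|Q$ of $A$ and then to bound $\mathcal{M}(A) \ge L(P^{*}|Q) = [P] + [Q]$ from below. The point is to cut the given block at the right place: for a block $443$ I cut between the first and the second $4$, so that $P$ begins with $4$ and $Q$ begins with $43$; for a block $444$ I again cut right after the first $4$, so that $P$ begins with $4$ and $Q$ begins with $44$. With these choices the two terms $[P]$ and $[Q]$ can each be pushed down to an explicit eventually periodic value.

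First I fix an occurrence of $443$ in $A$ and take the section $P^{*}|Q$ just described. Since $A$ contains neither $1$ nor $5$, every digit of $A$ lies in $\{2,3,4\}$, so the tails of $P$ and $Q$ after their prescribed prefixes are infinite sequences in $\mathcal{I} = \{2,3,4\}^{\mathbb{N}}$, hence have value $\ge [2^{\infty}]$ by \eqref{eq:1_5_forbidden_bound}. Applying Lemma~\ref{lem:monotone_lemma} with prefix $w = 4$ for $P$ and with prefix $w = 43$ for $Q$ gives
\[
[P] \ge [4 2^{\infty}], \qquad [Q] \ge [432^{\infty}],
\]
whence $\mathcal{M}(A) \ge [4 2^{\infty}] + [432^{\infty}]$. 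The identical reasoning applied to an occurrence of $444$, cutting after its first $4$, yields $\mathcal{M}(A) \ge [4 2^{\infty}] + [442^{\infty}]$.

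It then remains to evaluate the three quantities $[4 2^{\infty}]$, $[432^{\infty}]$, $[442^{\infty}]$ in closed form. From \eqref{N_matrix} and Lemma~\ref{lem:w_infty_formula} one has $[2^{\infty}] = 1/\sqrt2$, and then, using $N_4 \cdot z = (\sqrt3\, z + 2)/(z + \sqrt3)$ and $N_3 \cdot z = (2z + \sqrt3)/(\sqrt3\, z + 2)$, a direct computation gives
\[
[4 2^{\infty}] = \frac{2\sqrt2 + \sqrt3}{\sqrt6 + 1}, \qquad
[432^{\infty}] = \frac{7\sqrt2 + 4\sqrt3}{3\sqrt6 + 5}, \qquad
[442^{\infty}] = \frac{5\sqrt2 + 8\sqrt3}{2(\sqrt6 + 5)}.
\]
Plugging these into the two bounds above produces exactly the stated inequalities; for the third evaluation it is convenient to rationalize both the computed expression and the one in the statement to the common value $(\sqrt2 + 30\sqrt3)/38$.

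I do not expect a genuine obstacle here. The only mildly delicate points are the choice of cut point so as to maximize the resulting bound --- cutting after the \emph{first} $4$ of the block rather than before the block or between its later digits, which would only give weaker estimates --- and the slightly fiddly algebraic simplification of the iterated M\"obius images, in particular reconciling the two forms of $[442^{\infty}]$.
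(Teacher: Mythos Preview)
Your proof is correct and follows essentially the same approach as the paper: the same choice of section (cutting after the first $4$ in each block), the same use of \eqref{eq:1_5_forbidden_bound} together with Lemma~\ref{lem:monotone_lemma} to bound $[P]$ and $[Q]$ below by $[42^{\infty}]$, $[432^{\infty}]$, and $[442^{\infty}]$, and the same closed-form evaluations.
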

\begin{proof}
First, from \eqref{eq:1_5_forbidden_bound} and Lemma~\ref{lem:monotone_lemma}, 
we have
\[
[4 P] \ge [4 2^{\infty}]
=
\frac{\sqrt3 + 2 \sqrt2}{\sqrt6 + 1}
\]
and
\[
[43 Q] \ge [432^{\infty}] = 
\frac{7\sqrt 2 + 4\sqrt3}{3\sqrt6 + 5}.
\]
So, if $A$ contains 443,
\[
L(P^*4|43Q) = [4 P] + [43 Q]
\ge
\frac{\sqrt3 + 2 \sqrt2}{\sqrt6 + 1}
+
\frac{7\sqrt 2 + 4\sqrt3}{3\sqrt6 + 5}.
\]
This proves the first part of the lemma.

When $A$ contains $444$, we use
\[
[44Q] \ge [442^{\infty}]
=
\frac{ 5\sqrt2 + 8\sqrt3 }{ 2(\sqrt6 + 5) },
\]
so that
\[
L(P^*4|44Q) = [4 P] + [44 Q]
\ge
\frac{\sqrt3 + 2 \sqrt2}{\sqrt6 + 1}
+
\frac{ 5\sqrt2 + 8\sqrt3 }{ 2(\sqrt6 + 5) }.
\]
\end{proof}
\begin{lem}\label{lem:contain242}
Suppose that $A$ is a bi-infinite $\mathbf{H}_6$-sequence whose digits contain neither 1 nor 5.
Also, suppose that $A$ does not contain any of the following sequences
\begin{equation*}
444, 443, 344, 322, 223, 222.\tag{$\dagger$}
\end{equation*}
If $A$ contains 4423, then
\begin{align*}
\mathcal{M}(A) &\ge 
\frac{7 \, \sqrt{299}  + 69 \, \sqrt{3}}{3\sqrt{897} + 92} +
\frac{2 \, {\left(\sqrt{143}  + 6 \, \sqrt{3}\right)}}{\sqrt{429} + 13} \\
&= 2.64876844390582 \dots.
\end{align*}
If $A$ contains 4424, then
\begin{align*}
\mathcal{M}(A) &\ge 
\frac{7 \, \sqrt{299}  + 69 \, \sqrt{3}}{3\sqrt{897} + 92} +
\frac{76 \, \sqrt{299} + 759 \, \sqrt{3}}{33 \, \sqrt{897} + 989}
\\
& = 
2.65226159739540\dots.
\end{align*}
\end{lem}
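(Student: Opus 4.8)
The plan is to follow the same template as the earlier lemmas (Lemmas~\ref{lem:contain24_or_42} and \ref{lem:423_424}): locate a good extremal section of $A$ and bound its $L$-value from below using the monotonicity results. The hypothesis $(\dagger)$ forbids $444, 443, 344, 322, 223, 222$, and we already know $A$ contains neither $1$ nor $5$; so the admissible sequences over the alphabet $\{2,3,4\}$ are quite restricted. First I would record the consequences: around any $4$ the only allowed neighbours producing no forbidden triple are $2$ or $3$, and around any $2$ the only allowed neighbours are $4$ or $3$; in particular after $44$ the next digit must be $2$ (not $3$, since $443$ is forbidden, not $4$ since $444$ is), which is consistent with the hypothesis that $A$ contains $4423$ or $4424$. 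The key point is then to figure out the ``worst case'' (i.e.\ smallest possible) continuations to the left of the first $4$ and to the right of $4423$ (resp.\ $4424$).

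Concretely, suppose $A$ contains $4423$. I would take the section $P^*4|423Q$, so that $L(P^*4|423Q) = [4P] + [423Q]$, where $P$ is the reversed left tail and $423Q$ is the right tail. To bound $[4P]$ from below: $P$ begins with $44$ reversed, i.e.\ the digit before the first $4$ is again $4$ (this is forced — if it were $2$ or $3$ we would look at that occurrence instead, or else $P$ genuinely starts $44\dots$), and then $P$ continues subject to $(\dagger)$ and the no-$1$/no-$5$ constraint; applying Lemma~\ref{alg:min_max} with prefix $44$ and forbidden set $(\dagger)\cup\{\text{sequences with }1\text{ or }5\}$ gives an eventually periodic $P_{\min}$, and by Lemma~\ref{lem:monotone_lemma} $[4P]\ge [4P_{\min}]$. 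Similarly, for $[423Q]$: after $423$ the digit must avoid $322$ etc., and Lemma~\ref{alg:min_max} with prefix $423$ produces the minimizing periodic tail. The numerical values $\frac{7\sqrt{299}+69\sqrt3}{3\sqrt{897}+92}$ and $\frac{2(\sqrt{143}+6\sqrt3)}{\sqrt{429}+13}$ are exactly what Lemma~\ref{lem:w_infty_formula} outputs for these two eventually periodic sequences (one checks, e.g., that $\frac{7\sqrt{299}+69\sqrt3}{3\sqrt{897}+92} = [44(\overline{w})]$ for the appropriate period $w$, and analogously for the second term), and adding them gives the stated constant $2.64876844\ldots$. The case $4424$ is handled identically, the only change being the right tail: after $4424$ the forced/minimal continuation is different, yielding $[4424Q]\ge \frac{76\sqrt{299}+759\sqrt3}{33\sqrt{897}+989}$, while the left bound $[4P]\ge \frac{7\sqrt{299}+69\sqrt3}{3\sqrt{897}+92}$ is unchanged.

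The main obstacle is the bookkeeping in applying Lemma~\ref{alg:min_max}: one must correctly enumerate, under the rather long forbidden list $(\dagger)$ together with the $1,5$-exclusion, which digit is actually minimal at each position (recalling from \eqref{eq:monoton_leading_digit} that larger digits give larger values, and from Lemma~\ref{lem:monotone_lemma} that this propagates), and then verify that the resulting eventually periodic word is itself admissible (contains none of the forbidden sequences). Once the two periodic words $P_{\min}$ and the corresponding right tails are pinned down, evaluating $[\,\cdot\,]$ via Lemma~\ref{lem:w_infty_formula} and checking that the sum equals the claimed surd is a finite, mechanical computation. I would also double-check that taking the section at $P^*4|423Q$ (rather than, say, $P^*44|23Q$ or $P^*|4423Q$) genuinely gives the largest lower bound among the natural choices — but since we only need \emph{a} lower bound of the stated size, it suffices to exhibit one good section, so this is not a real difficulty.
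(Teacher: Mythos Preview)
Your overall strategy matches the paper's: take the section $P^*4\,|\,423Q$ (respectively $P^*4\,|\,424Q$) and bound $[4P]$ and $[423Q]$ (respectively $[424Q]$) from below via Lemma~\ref{alg:min_max} with the forbidden list $(\dagger)$ and the exclusion of $1$ and $5$.

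The gap is in your prefix analysis for $[4P]$. You assert that $P$ begins with $4$, so that $4P$ has prefix $44$; but in the section $P^*4\,|\,423Q$ the $4$ to the left of the bar is the \emph{first} $4$ of the block $4423$, and the leading digit of $P$ is whatever digit of $A$ sits immediately to the left of that block. Since $444$ and $344$ both lie in $(\dagger)$, that digit is forced to be $2$, so $4P$ begins with $42$ and can never begin with $44$. Running Lemma~\ref{alg:min_max} with prefix $44$ would output a strictly larger number, which is then not a valid lower bound for $[4P]$. The paper instead uses prefix $4$ and obtains $[4P]\ge[42(242)^\infty]=\frac{7\sqrt{299}+69\sqrt3}{3\sqrt{897}+92}$; this value is not of the form $[44\overline{w}]$ as you suggest. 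For the right tails the paper finds $[423Q]\ge[4(23)^\infty]$ and, in the $4424$ case, $[424Q]\ge[4242(242)^\infty]$ (the quantity to bound is $[424Q]$, not $[4424Q]$). With these corrected prefixes your sketch becomes exactly the paper's proof.
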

\begin{proof}
Suppose that $A$ contains 4423. 
We find a lower bound of
\[
L(P^*4|423Q) = [4 P ] + [423Q]
\]
by applying Lemma~\ref{alg:min_max}.
Using the prefix $4$ and the forbidden sequences in ($\dagger$),
we obtain 
$[4 P] \ge [42(242)^{\infty}]$.
Also, using the prefix $423$, we have
$[423Q] \ge [4(23)^{\infty}]$.
As a result, we obtain
\begin{align*}
L(P^*4|423Q) &\ge [42(242)^{\infty}] + [4(23)^{\infty}] \\
&=
\frac{7 \, \sqrt{299}  + 69 \, \sqrt{3}}{3\sqrt{897} + 92} +
\frac{2 \, {\left(\sqrt{143}  + 6 \, \sqrt{3}\right)}}{\sqrt{429} + 13}.
\end{align*}
When $A$ contains 4424, we proceed similarly. 
Using the prefix $424$, we obtain
$[424Q] \ge [4242(242)^{\infty}]$,
and
\begin{align*}
L(P^*4|424Q) 
&\ge [42(242)^{\infty}] + [4242(242)^{\infty}] \\
&=
\frac{7 \, \sqrt{299}  + 69 \, \sqrt{3}}{3\sqrt{897} + 92} +
\frac{76 \, \sqrt{299} + 759 \, \sqrt{3}}{33 \, \sqrt{897} + 989}.
\end{align*}
This completes the proof of the lemma.
\end{proof}

\subsection{Proof of Theorem~\ref{thm:three_gaps1}}
First, we prove that $(\frac{\sqrt{143}}{5}, \sqrt7)$ is a maximal gap.
Suppose that $A$ be a bi-infinite $\mathbf{H}_6$-sequence and assume that 
\[
\frac{\sqrt{143}}{5}
< \mathcal{M}(A) < 
\sqrt7 = 2.64575\dots.
\]
We shall deduce a contradiction from this.

If $A$ contains 1 or 5, then Lemma~\ref{lem:contain1_or_5} implies that $A = {}^{\infty}(15)^{\infty}$, which is a contradiction because
$\mathcal{M}(A) = \sqrt{7}$.
So, we can assume that $A\in\{2, 3, 4\}^{\mathbb{Z}}$. 
Notice from Lemma~\ref{lem:423_424} that 443 and 444 are forbidden in $A$.
Since 
$\mathcal{M}(A)=
\mathcal{M}(A^{\vee})=
\mathcal{M}(A^*)=
\mathcal{M}((A^*)^{\vee})
$
(cf.~\eqref{lem:A_conjugates})
all the sequences in ($\dagger$) of Lemma~\ref{lem:contain242} are forbidden in $A$.
On the other hand, Lemma~\ref{lem:contain24_or_42} implies that $A$ must contain $22$ or $44$.
Replacing $A$ with $A^{\vee}$ if necessary, we may assume without loss of generality that $A$ contains 44.
Avoiding 443 and 444, we see that 44 must extend to 442. 
Furthermore, Lemma~\ref{lem:contain242} shows that this should further extend to 4422. 
In particular, $A^{\vee}$ contains 2244.
Repeating the same argument, we see that the (original) 44 must extend to $(4422)^{\infty}$.
to the same with $(A^*)^{\vee}$ (which contains ${}^{\infty}(2244)$)
and we conclude that $A = {}^{\infty}(4422)^{\infty}$.
This is a contradiction because one can easily show that
$\mathcal{M}({}^{\infty}(4422)^{\infty}) = \sqrt7$.
This completes the proof of the fact that $(\frac{\sqrt{143}}{5}, \sqrt7)$ is a maximal gap.

Next, we prove that $(
\sqrt7,
\frac{13\sqrt3 + 13\sqrt7 + \sqrt{143}}{26}
)$ is a maximal gap.
Assume that $A$ is a bi-infinite $\mathbf{H}_6$-sequence such that
\[
\sqrt7 < \mathcal{M}(A) < 
\frac{
13\sqrt3 + 13\sqrt7 + \sqrt{143}
}{
26
}
=
2.64883416482063
\dots.
\]
As before, Lemma~\ref{lem:contain1_or_5} says that 
$A\in\{2, 3, 4\}^{\mathbb{Z}}$.
Also, thanks to Lemmas~\ref{lem:contain24_or_42} and \ref{lem:423_424}, 
we may assume that $A$ contains $44$, but 443 and 444 are forbidden in $A$.
By the same argument, all the sequences in ($\dagger$) of Lemma~\ref{lem:contain242} are forbidden in $A$.
So, Lemma~\ref{lem:contain242} says that $4424$ is also forbidden in $A$.
In conclusion, the sequence $44$ must extend to 
$4422$ or $4423$. 

Suppose that $44$ is extended to $4423$.
Then $4423$ must extend (to the left) to 24423, because 444 and 344
are forbidden in $A$.
So,
\[
\mathcal{M}(A) \ge L(P^*24|423Q) = [42 P] + [423Q].
\]
We apply Lemma~\ref{alg:min_max} with the prefixes 42 and 423 and the sequences ($\dagger$), plus 4424, as forbidden.
This gives 
$[42 P] \ge [(4224)^{\infty}]$
and
$[423Q] \ge [4(23)^{\infty}]$.
Consequently,
\[
\mathcal{M}(A) \ge 
[(4224)^{\infty}] + 
[4(23)^{\infty}]
=
\frac{
13\sqrt3 + 13\sqrt7 + \sqrt{143}
}{
26
}
.
\]
This completes the proof that 
$(
\sqrt7,
\frac{13\sqrt3 + 13\sqrt7 + \sqrt{143}}{26}
)$ is a maximal gap.

To complete the proof of Theorem~\ref{thm:three_gaps1}, it remains to show that 
$\sqrt{143}/5$ 
and
$\frac{13\sqrt3 + 13\sqrt7 + \sqrt{143}}{26}$
are accumulation points of $\mathscr{M}(\mathbf{H}_6)$.
Since $22$ and $44$ are forbidden in ${}^{\infty}3 \, (43)^{k+l} \, 3^{\infty}$
for any $k, l\ge0$,
Lemma~\ref{lem:contain24_or_42} says 
\begin{equation}
    \mathcal{M}({}^{\infty}3 \, (43)^{k+l} \, 3^{\infty}) < 
    \frac{\sqrt{143}}{5}.
\end{equation}
On the other hand,
\begin{align*}
    \mathcal{M}({}^{\infty}3 \, (43)^{k+l} \, 3^{\infty}) 
    &\ge
    L({}^{\infty}3 \, (43)^{k} | (43)^l \, 3^{\infty})  \\
    &=
    [(34)^{k} \, 3^{\infty}] + [(43)^l\, 3^{\infty}] \\
    &\to
    [(34)^{\infty}] + [(43)^{\infty}] = 
    \mathcal{M}({}^{\infty}(43)^{\infty}) = \frac{\sqrt{143}}5
\end{align*}
as $k,l\to\infty$ (cf.~Table~\ref{tab:Markoff_numbers_examples}).
This proves that 
$\sqrt{143}/5$ is an accumulation point.

To prove that $\frac{13\sqrt3 + 13\sqrt7 + \sqrt{143}}{26}$ is an accumulation point, define
\[
A_k = {}^{\infty}(4224) \, 4(23)^k \, 3^{\infty}
\]
for $k\ge1$.
To compute $\mathcal{M}(A_k)$, it is enough to check $L(P^*|Q)$ for those sections $P^*|Q$ of $A_k$ that the prefix of $P$ and $Q$ is 4. 
From this, it follows that 
$\mathcal{M}(A_k) = L(P^*|Q)$ when $P = (4224)^{\infty}$ and $Q = 4(23)^k3^{\infty}$.
So,
\begin{align*}
\mathcal{M}(A_k) &=  [(4224)^{\infty}] + [4(23)^k 3^{\infty}]  \\
&\to
[(4224)^{\infty}] + [4(23)^{\infty}] 
=\frac{13\sqrt3 + 13\sqrt7 + \sqrt{143}}{26}
\end{align*}
as $k\to\infty$.
This proves that $\frac{13\sqrt3 + 13\sqrt7 + \sqrt{143}}{26}$ is an accumulation point.

\subsection{Proof of Theorem~\ref{thm:longest_gap}}
Recall that 
a section $P^*|Q$ of a bi-infinite $\mathbf H_6$-sequence $A$ is called extremal if $\mathcal{M}(A) = L(P^*|Q)$.
\begin{prop}\label{prop:shift_root_3}
Suppose that $P^*|Q$ is an extremal section of a bi-infinite $\mathbf H_6$-sequence $A$. 
Then, for any $k\ge0$, 
the section $P^*|5^k\, Q$ is extremal and  
\[
\mathcal{M}(P^* 5^k Q) = 
\mathcal{M}(A) +
\sqrt3 k.
\]
\end{prop}
\begin{proof}
Notice that
\begin{align*}
L(P^*| 5^k Q) &= 
[P] + [5^k Q] \\
&=[P] + [Q] + \sqrt3k \\
&= \mathcal{M}(A) + \sqrt 3k. 
\end{align*}
So, it remains to show that 
$P^*| 5^k Q$ 
is an extremal section of $P^*5^kQ$.

First, we claim that 
\[
[u \, 5 \, X] \le [5 \, u \, X]
\]
for any infinite sequence $X$ and a finite sequence $u$.
If $u = 5^l$ for some $l$, there is nothing to prove.
Otherwise, write $u = 5^l u'$ with maximal $l\ge 0$, so that the leading digit of $u'$ is not 5.
Then, from \eqref{eq:monoton_leading_digit}, we see that
\[
    [u' \, 5 \, X] \le \sqrt3 \le [5 \, u' \, X].
\]
We can apply Lemma~\ref{lem:monotone_lemma} to the above inequality to obtain
\[
[u \, 5 \, X] = [5^l \, u' \, 5 \, X] \le 
[5^l \, 5 \, u' \, X]
= [5 \, u \, X],
\]
which proves the claim.

Now we prove the extremality of 
$P^*| 5^k Q$.
It is easy to see that 
$L(P^* 5^l| 5^{k-l} Q)=L(P^*|Q) + \sqrt3 k$
for $l = 0, \dots, k$.
So it suffices to consider the two kinds of sections
$
L(P^* 5^k w | Q') 
$
and
$
L(P'^*|u^* 5^k Q) 
$
where $P = u P'$ and $Q = w Q'$ for arbitrary finite sequences $u$ and $w$.
For the first, we have
\begin{align*}
L(P^*  5^k w | Q') &= 
[w^* 5^k P] + [Q] \\
&\le 
[5^k w^* P] + [Q] 
\end{align*}
where the last inequality comes from the above claim.
Therefore,
\begin{align*}
L(P^*  5^k w | Q') &\le 
[5^k w^* P] + [Q']  \\
&= 
[w^* P] + [Q'] 
+
\sqrt3 k \\
&=
L(P^* w | Q') + \sqrt3 k \\
&\le 
L(P^*|Q) 
+
\sqrt3 k
=
\mathcal{M}(A) + \sqrt3k.
\end{align*}
Here the last inequality comes from the extremallity of $P^*|Q$.
For the other kind of sections, we proceed similarly.
\begin{align*}
L(P'^*|u^* 5^k Q) &=
[P'] + [u^* 5^k Q]  \\
&\le 
[P'] + [5^k u^* Q]  \\
&= 
[P'] + [u^* Q]  + \sqrt3k \\
&\le L(P^*|Q) + \sqrt3k = 
\mathcal{M}(A) + \sqrt3k.
\end{align*}
This completes the proof of the extremality of $P^*| 5^k Q$.
\end{proof}
\begin{table}
    \centering
    \begin{tabular}{c c l}
    \toprule
    $A_j$ & Extremal Section of $A_j$ & $\mathcal{M}(A_j)$ \\
    \midrule
    $A_1 = {}^{\infty}(42)^{\infty} $  &
    ${}^{\infty}(42) | (42)^{\infty} $ 
    & $\frac{\sqrt{13}}{\sqrt3} = 2.081\dots$ \\
    $A_2 = {}^{\infty}(42)\, 3\, (42)^{\infty} $  &
    ${}^{\infty}(42) \, 3 | (42)^{\infty} $ 
    & $\frac4{\sqrt{3}} = 2.309\dots$ \\
    $A_3 = {}^{\infty}(43)^{\infty} $  &
    ${}^{\infty}(43)| (43)^{\infty} $ 
    & $\frac{\sqrt{143}}{5} = 2.391 \dots$ \\
    $A_4 = {}^{\infty}(4224)^{\infty} $  &
    ${}^{\infty}(4224)| (4224)^{\infty} $ 
    & $\sqrt{7}= 2.645\dots$ \\
    $A_5 = {}^{\infty} 4^{\infty} $  &
    ${}^{\infty}4| 4^{\infty} $ 
    & $\sqrt{8} = 2.828\dots$ \\
    $A_6 = {}^{\infty}(51522)^{\infty} $  &
    ${}^{\infty}(51522)| (51522)^{\infty} $ 
    & $\frac{4\sqrt{26}}{7} = 2.913\dots$ \\
    $A_7 = {}^{\infty}(522)^{\infty} $  &
    ${}^{\infty}(522)| (522)^{\infty} $ 
    & $\sqrt{10} = 3.162\dots$ \\
    $A_8 = {}^{\infty}(5212)^{\infty} $  &
    ${}^{\infty}(5212)| (5212)^{\infty} $ 
    & $\sqrt{11} = 3.316\dots$ \\
    $A_9 = {}^{\infty}(534532)^{\infty} $  &
    ${}^{\infty}(534532)| (534532)^{\infty} $ 
    & $\frac{\sqrt{435}}{6} = 3.476\dots$ \\
    $A_{10} = {}^{\infty}(5254)^{\infty} $  &
    ${}^{\infty}(5254)| (5254)^{\infty} $ 
    & $\frac{2\sqrt{10}}{\sqrt3} = 3.651\dots$ \\
    \bottomrule
    \end{tabular}
    \caption{Extremal sections of $A_j$ for $j = 1, \dots, 10$ and their Markoff numbers.}
    \label{tab:10_markoff_numbers}
\end{table}
To finish the proof of Theorem~\ref{thm:longest_gap}, we find extremal sections of $A_j$ for $j =1, \dots, 10$ as shown in 
Table~\ref{tab:10_markoff_numbers},
and compute $\mathcal{M}(A_j)$ for $j = 1, \dots, 10$ using Lemma~\ref{lem:w_infty_formula}.
We see that all the differences $\mathcal{M}(A_{j+1})-\mathcal{M}(A_j)$, as well as $\mathcal{M}(A_1) + \sqrt3 - \mathcal{M}(A_{10})$, are less than or equal to $\sqrt7 - \frac{\sqrt{143}}5$. 
By combining this with Proposition~\ref{prop:shift_root_3},
we complete the proof of Theorem~\ref{thm:longest_gap}.

\section{Dimension of the Markoff spectrum}
\label{Sec:dim}
The goal of this section is to prove Theorem~\ref{thm_dimension}.
We recall the following characterization of the initial discrete part of $\mathscr{M}(\mathbf{H}_6)$. 
Note that the digit expansion used in \cite{CCGW22} is slightly different from ours.

\begin{thm}[\cite{CCGW22}]\label{disc_spec}
If $\mathcal M(A) < \frac{4}{\sqrt 3}$, then
$A$ is one of the following bi-infinite $\mathbf H_6$-sequences: 
$$
{}^\infty 3 ^{\infty}, \ {}^\infty(42)^{\infty}, \
\text{ or }
{}^\infty((42)^k3)^{\infty} \ \text{ for some } k \ge 1.
$$
Also, we have
$$ \lim_{k\to\infty} \mathcal M({}^\infty ((42)^k3)^{\infty}) 
= \lim_{k\to\infty} L ({}^\infty ((42)^k3) | ((42)^k3)^{\infty}) 
= \frac{4}{\sqrt 3}.$$
\end{thm}

We construct bi-infinite sequences whose Markoff numbers are 
close to the first accumulation point $\frac{4}{\sqrt3}$
by concatenating the blocks $(42)^{m+1}3$ and $(42)^{m}3$ for large $m$.
Let $\varepsilon>0$.
Choose $m\in\mathbb{N}$ such that for any $P$ and $Q$
\begin{equation}\label{ebound}
   [3(24)^{m} P] + [(42)^{m+1} Q]
   < [3(24)^{\infty}]+[(42)^{\infty}] +\varepsilon = \frac{4}{\sqrt{3}}+\varepsilon. 
\end{equation}
Let 
$$
u = (42)^{m+1}3 \quad \text{and} \quad w = (42)^{m}3,
$$
and define
\begin{equation*}
    E = \{ P \in \{ 2,3,4\}^{\mathbb N} \, | \, P = w^{m_1} u^{n_1} w^{m_2} u^{n_2} \cdots , n_i, m_i \in \{1,2\}  \text{ for } i \ge 1  \}.
\end{equation*}
\begin{lem}\label{dimension}
We have 
$$\dim_H (\{ [P] \, | \, P \in E \}) > 0.$$
\end{lem}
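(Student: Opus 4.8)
The plan is to exhibit a bi-Lipschitz-type encoding of an infinite binary tree (or an IFS with countably/finitely many branches) inside the set $\{[P] : P \in E\}$ and to estimate the Hausdorff dimension from below by a standard mass-distribution / Moran-construction argument. The key observation is that $E$ consists exactly of concatenations of the two finite blocks $A = (42)^{m+1}3$ and $B = (42)^m 3$, each used between one and two times in alternation. Since the map $P \mapsto [P]$ is given by the convergent product of the matrices $N_{d}$ along the digits of $P$ (see \eqref{endpt_exp} and \eqref{expansion}), the cylinder sets $\{[P] : P \text{ begins with } w\}$ for admissible finite words $w$ are genuine intervals whose lengths contract geometrically in the length of $w$, with multiplicative distortion bounded uniformly (this is the usual bounded-distortion property for linear-fractional cocycles with positive entries, which holds here because all $N_d$ have nonnegative entries and $[P] \in (0,\infty)$ stays in a compact subinterval bounded away from $0$ and $\infty$).

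First I would set up the symbolic space: let $\Omega \subset \{1,2\}^{\mathbb N}$ be the set of sequences $(m_1,n_1,m_2,n_2,\dots)$, and let $\pi:\Omega \to \{2,3\}^{\mathbb N}$ send such a sequence to $B^{m_1}A^{n_1}B^{m_2}A^{n_2}\cdots$; then $E = \pi(\Omega)$ and I want a lower bound on $\dim_H\{[P]:P\in E\}$. Second, I would prove the bounded-distortion estimate: there is a constant $C\ge1$ so that for any admissible finite prefix $w$ of some $P\in E$, writing $I_w$ for the corresponding interval of continuations, one has $C^{-1}\,\|N_w\|^{-2} \le |I_w| \le C\,\|N_w\|^{-2}$ where $N_w$ is the product of the $N_{d_i}$ over the digits of $w$; this follows because $[Q]$ ranges over a fixed compact interval $K\subset(0,\infty)$ as $Q$ ranges over all tails, and $N_w$ acts on $K$ with derivative comparable to $\|N_w\|^{-2}$. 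Third, because the block structure uses only finitely many "super-letters" ($B^{m}A^{n}$ with $m,n\in\{1,2\}$, so four of them), the construction is a self-similar-type Moran construction with a uniformly bounded number of children at each level and uniformly comparable contraction ratios at each level; a direct mass-distribution argument (put the uniform Bernoulli measure on $\Omega$, push forward, and bound the local dimension of the image measure from below using the distortion estimate) then yields $\dim_H\{[P]:P\in E\} \ge \log 4 / (2\log\Lambda) > 0$ where $\Lambda$ bounds the norm growth of the four super-letter matrices.

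The main obstacle I expect is the bounded-distortion / comparability step: one has to verify that distinct admissible prefixes give rise to disjoint (or boundedly-overlapping) intervals and that the interval-length estimate $|I_w|\asymp \|N_w\|^{-2}$ holds with a constant independent of $w$, uniformly over the whole family $E$. For that, the crucial input is that $[Q]\in(0,\infty)$ for every tail and that the relevant tails actually lie in a fixed compact subinterval of $(0,\infty)$ bounded away from the endpoints — this needs the explicit forms of $N_1,\dots,N_5$ in \eqref{N_matrix} together with the fact that the digit $3$ (which separates the $42$-runs) keeps the orbit off the boundary. Once that compactness is in hand, everything else is the routine Moran/mass-distribution estimate, and the precise value of the lower bound is irrelevant for the lemma — we only need it to be strictly positive, and $\Lambda$ is an explicit finite number coming from the four $6\times$-ish matrix products $N_B^{m}N_A^{n}$. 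Note also that the bound \eqref{ebound} on $m$ plays no role in this lemma itself; it is there to control where $\{[P]:P\in E\}$ sits inside the spectrum, which is used in the subsequent deduction of Theorem~\ref{thm_dimension}, not here.
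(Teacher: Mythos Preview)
Your proposal is correct and takes essentially the same approach as the paper: both recognize that $F=\{[P]:P\in E\}$ is the attractor of the four-branch IFS given by the M\"obius maps $x\mapsto N_B^{m}N_A^{n}\cdot x$ for $m,n\in\{1,2\}$ acting on a compact interval $D\subset(0,\infty)$, and then invoke standard IFS dimension theory to get $\dim_H F>0$. The only difference is packaging---the paper cites Falconer's Proposition~9.7 directly, using the explicit lower bi-Lipschitz bounds $|f_i(x)-f_i(y)|\ge c_i|x-y|$ on $D=[\,[(B^2A)^\infty],\,[(BA^2)^\infty]\,]$, whereas you propose the equivalent bounded-distortion/mass-distribution argument; your concern about separation is easily handled here since the four images $f_i(D)$ are disjoint subintervals (the $\mathbf H_6$-expansion is a genuine coding).
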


\begin{proof}
Let 
$$
F := \{ [P] \, | \, P \in E \}.
$$ 
We will construct an iterated function system of $F$,
which will give a lower bound of the Hausdorff dimension of $F$. 
See \cite{Fal03}*{Proposition 9.7}, for example.

Define
$$
\alpha := [(w^2 u)^\infty ] \quad
\text{and}
\quad
\beta := [(w u^2)^\infty]. 
$$
Then, for each $P \in E$, we have
$$
0 <  \alpha \le [P] \le \beta.
$$
For $i = 1, 2, 3, 4$, define $f_i : F \to F$ to be
\begin{align*}
f_1([P]) &= [w^2\,u\,P],  &   f_2([P]) &= [w^2\,u^2\,P],  \\
f_3([P]) &= [w\,u\,P], & f_4([P]) &= [w\,u^2\,P]. 
\end{align*}
Then
$F$ is a disjoint union of the images of $f_i$, that is,
$$F = f_1(F) \cup f_2(F) \cup f_3(F) \cup f_4(F).$$
Let $a_i, b_i, c_i, d_i \in \mathbb{R}$ such that
$f_i (x)=\begin{pmatrix}
a_i & b_i \\ c_i & d_i
\end{pmatrix}\cdot x$
for $i = 1, 2, 3, 4$.
From \eqref{N_matrix}, we have $c_i, d_i > 1$ 
and 
\[
|f_i (x)-f_i (y)|=\frac{|x-y|}{(c_i x+d_i )(c_i y+d_i )} < \frac{|x-y|}{(\alpha+1)^2}.
\]
Thus, $\{f_1, f_2, f_3, f_4\}$ is a family of contracting functions on $F$.
Furthermore, for all $x,y \in F$,
$$
|f_i (x)-f_i (y)|=\frac{|x-y|}{(c_i x+d_i )(c_i y+d_i )} >  \frac{|x-y|}{(c_i \beta + d_i)^2} =C_i |x-y|
$$
where $C_i:=1/(c_i\beta + d_i)^2$.
Let $s>0$ be a constant satisfying
\begin{equation*}
C_1^s + C_2^s + C_3^s + C_4^s = 1.
\end{equation*}
Now, we invoke \cite{Fal03}*{Proposition 9.7} to conclude
$\dim_H(F) \ge s>0$.
\end{proof}

Choose 
$$P = w^{m_1} u^{n_1} w^{m_2} u^{n_2} \cdots \in E,$$
where $n_i, m_i \in \{1,2\}$.
Let
$$
v_k = w^{m_1} u^{n_1} w^{m_2} \cdots u^{n_k}
$$
and 
$$
A_P = {}^\infty w u^3 w v_1 w^{2} u^3 w v_{2} w^{3} u^3 w v_{3} w^{4} u^3 w v_4 \cdots w^{k} u^3 w v_{k} w^{k+1} u^3 w v_{k+1} \cdots.
$$

\begin{lem}\label{sections}
We have
$$
\mathcal L( A_P) = [(w^*)^\infty] + [u^3 w P].
$$
\end{lem}

\begin{proof}
To compute $\mathcal{L}(A_P)$, we claim that it is enough to consider a section of $A_P$ of the type 
$R^*3 |4 S$ of $A_P$ or
$R^*4|3S$ of $(A_P)^\vee$.
Other sections of $A_P$ are of the type $R_1^*2|S_1$ or $R_2^*4|S_2$.
For sections of the first type, we have either
\[
L(R_1^* 2 | S_1) = L(R_1^* 2 | 3S'_1) = [2 R_1 ] + [3 S'_1 ] \le \frac{\sqrt 3}{2} + \frac{2}{\sqrt 3} = \frac{7}{2\sqrt 3} < \frac{4}{\sqrt 3}
\]
or
\[
L(R^*_1 2 | S_1) = L(R_1^* 2 | 42S''_1) =  [2 R_1] + [42 S''_2] \le \frac{\sqrt 3}{2} + \frac{7}{3\sqrt 3} = \frac{23}{6\sqrt 3} <\frac{4}{\sqrt 3}.
\]
For the second type, we have either
\[
L(R^*_2 4 | S_2) = L((R'_2)^* 3 4 | 2 S'_2) = [4 3 R'_2] + [2 S'_2] 
\le \frac{4\sqrt 3}{5} + \frac{\sqrt 3}{2} = \frac{13\sqrt 3}{10} < \frac{4}{\sqrt 3}
\]
or
\[
L(R^*_2 4 | S_2) = L((R''_2)^* 2 4 | 2 S'_2) = [4 2 R''_2] + [2 S'_2] 
\le \frac{7}{3\sqrt 3} + \frac{\sqrt 3}{2} = \frac{23}{6\sqrt 3} < \frac{4}{\sqrt 3}.
\]
Since $A_P$ is not one of the bi-infinite sequences listed in Theorem~\ref{disc_spec} we have $\mathcal{L}(A_P) \ge \frac4{\sqrt3}$.
Therefore,
$$
\limsup_{R^*|S} L( R^*|S) = 
\limsup_{(R')^*3|S'} L( (R')^*3|S').
$$
Similarly, for $(A_P)^\vee$ we have
$$
\limsup_{R^*|S} L( (R^\vee)^*|S^\vee) = 
\limsup_{(R')^*|3S'} L( (R'^\vee)^*|3S'^\vee).
$$
Hence, we have
\begin{align*}
\mathcal L( A_P) &= \max \left\{ \limsup_{R^*3|S} L( R^*3|S), \,\limsup_{R^*|3S} L( (R^\vee)^*|3S^\vee) \right\}, 
\end{align*}
where $R^*3|S$ and $R^*|3S$ runs over all sections of $A_P$. 
Using the fact that for any infinite sequences $Q,R$,
$$
[u Q] > [w R], \quad 
[u^* Q] < [w^* R],
$$
we conclude that 
\begin{align*}
\limsup_{R^*3|S} L( R^*3|S)
&= \limsup_{k \to \infty} L( \cdots w^{k-1} u^3 w v_{k-1} w^{k} | u^3 w v_k w^{k+1} u^3 w v_{k+1} \cdots ) \\
&= L( {}^\infty w | u^3 w P) = [(w^*)^\infty] + [u^3 w P]. 
\end{align*}
We note that $3u^\vee = u^*3$ and $3w^\vee = w^*3$. 
Therefore, we have
\begin{align*}
\limsup_{R^*|3S} L( (R^\vee)^*|3S^\vee) 
&= \lim_{k \to \infty} 
L ( \cdots (u^*)^{n_{k-1}} (w^*)^{k} (u^*)^3| w^* (w^*)^{m_1} (u^*)^{n_1} \cdots ) \\
&\le [u^3 w^\infty] + [(w^*)^\infty]
< [u^3 w P] + [(w^*)^\infty].
\end{align*}
Hence, we have 
\begin{equation*}
\mathcal L( A_P) 
= L( {}^\infty w | u^3 wP) = [(w^*)^\infty] + [u^3 wP]. \qedhere
\end{equation*}
\end{proof}
Let  
$$
K = \left\{ [(w^*)^\infty] + [u^3 w P] \, | \,  P \in E \right\}.
$$
Then, Lemma~\ref{sections} and \eqref{ebound} yield that 
\begin{equation}
K \subset \mathscr L(\mathbf H_6) \cap  \left(0, \frac{4}{\sqrt{3}} + \varepsilon\right).
\end{equation}
Since $[P] \mapsto [u^3 w P ] = N_{uuuw} \cdot [P]$ is a bi-Lipschitz function on $F$, 
we have $\dim_H(K) = \dim_H(F)$
(see \cite{Fal03}*{Corollary 2.4}).
Therefore, Lemma~\ref{dimension} implies  $\dim_H(K) >0$ and we complete the proof of  Theorem~\ref{thm_dimension}.

\section*{Acknowledgement}
The authors would like to thank the anonymous referees for their careful reading and helpful comments.
D.K.~was supported by the National Research Foundation of Korea (NRF) (NRF-2018R1A2B6001624, RS-2023-00245719).
D.S.~was supported by the National Research Foundation of Korea (NRF) (NRF-2018R1A2B6001624, NRF-2020R1A2C1A01011543).
B.C.~was supported by 2025 Hongik University Research Fund.


\begin{bibdiv}
\begin{biblist}



\bib{Aig13}{book}{
   author={Aigner, Martin},
   title={Markov's theorem and 100 years of the uniqueness conjecture},
   note={A mathematical journey from irrational numbers to perfect
   matchings},
   publisher={Springer, Cham},
   date={2013},
   pages={x+257},
   isbn={978-3-319-00887-5},
   isbn={978-3-319-00888-2},
   review={\MR{3098784}},
   doi={10.1007/978-3-319-00888-2},
}

\bib{AMU20}{article}{
   author={Artigiani, Mauro},
   author={Marchese, Luca},
   author={Ulcigrai, Corinna},
   title={Persistent Hall rays for Lagrange spectra at cusps of Riemann
   surfaces},
   journal={Ergodic Theory Dynam. Systems},
   volume={40},
   date={2020},
   number={8},
   pages={2017--2072},
   issn={0143-3857},
   review={\MR{4120771}},
   doi={10.1017/etds.2018.143},
}



\bib{Bom07}{article}{
   author={Bombieri, Enrico},
   title={Continued fractions and the Markoff tree},
   journal={Expo. Math.},
   volume={25},
   date={2007},
   number={3},
   pages={187--213},
   issn={0723-0869},
   review={\MR{2345177}},
   doi={10.1016/j.exmath.2006.10.002},
}

\bib{CCGW22}{article}{
   author={Cha, Byungchul},
   author={Chapman, Heather},
   author={Gelb, Brittany},
   author={Weiss, Chooka},
   title={Lagrange spectrum of a circle over the Eisensteinian field},
   journal={Monatsh. Math.},
   volume={197},
   date={2022},
   number={1},
   pages={1--55},
   issn={0026-9255},
   review={\MR{4368629}},
   doi={10.1007/s00605-021-01649-y},
}

\bib{CK22}{article}{
   author={Cha, Byungchul},
   author={Kim, Dong Han},
   title={Intrinsic Diophantine approximation on the unit circle and its
   lagrange spectrum},
   language={English, with English and French summaries},
   journal={Ann. Inst. Fourier (Grenoble)},
   volume={73},
   date={2023},
   number={1},
   pages={101--161},
   issn={0373-0956},
   review={\MR{4588926}},
}


\bib{CNT18}{article}{
   author={Cha, Byungchul},
   author={Nguyen, Emily},
   author={Tauber, Brandon},
   title={Quadratic forms and their Berggren trees},
   journal={J. Number Theory},
   volume={185},
   date={2018},
   pages={218--256},
   issn={0022-314X},
   review={\MR{3734349}},
   doi={10.1016/j.jnt.2017.09.003},
}

\bib{Cus74}{article}{
   author={Cusick, T. W.},
   title={The largest gaps in the lower Markoff spectrum},
   journal={Duke Math. J.},
   volume={41},
   date={1974},
   pages={453--463},
   issn={0012-7094},
   review={\MR{0466018}},
}
\bib{CF89}{book}{
   author={Cusick, Thomas W.},
   author={Flahive, Mary E.},
   title={The Markoff and Lagrange spectra},
   series={Mathematical Surveys and Monographs},
   volume={30},
   publisher={American Mathematical Society, Providence, RI},
   date={1989},
   pages={x+97},
   isbn={0-8218-1531-8},
   review={\MR{1010419}},
   doi={10.1090/surv/030},
}

\bib{Fal03}{book}{
   author={Falconer, Kenneth},
   title={Fractal geometry},
   edition={2},
   note={Mathematical foundations and applications},
   publisher={John Wiley \& Sons, Inc., Hoboken, NJ},
   date={2003},
   pages={xxviii+337},
   isbn={0-470-84861-8},
   review={\MR{2118797}},
   doi={10.1002/0470013850},
}


\bib{HS86}{article}{
   author={Haas, Andrew},
   author={Series, Caroline},
   title={The Hurwitz constant and Diophantine approximation on Hecke
   groups},
   journal={J. London Math. Soc. (2)},
   volume={34},
   date={1986},
   number={2},
   pages={219--234},
   issn={0024-6107},
   review={\MR{856507}},
   doi={10.1112/jlms/s2-34.2.219},
}



\bib{KS22}{article}{
  author={Kim, Dong Han},
  author={Sim, Deokwon},
  title={The {Markoff} and {Lagrange} spectra on the {Hecke} group H4},
  date={2022},
  eprint={arXiv:2206.05441 [math.NT]},
}




\bib{Leh85}{article}{
   author={Lehner, J.},
   title={Diophantine approximation on Hecke groups},
   journal={Glasgow Math. J.},
   volume={27},
   date={1985},
   pages={117--127},
   issn={0017-0895},
   review={\MR{819833}},
   doi={10.1017/S0017089500006121},
}

\bib{Mal77}{article}{
  author={Maly\v sev, A. V.},
  title={Markoff and Lagrange spectra (a survey of the literature)},
  language={Russian},
  note={Studies in number theory (LOMI), 4},
  journal={Zap. Nau\v cn. Sem. Leningrad. Otdel. Mat. Inst. Steklov. (LOMI)},
  volume={67},
  date={1977},
  pages={5--38, 225},
  review={\MR {0441876}},
}

\bib{Mar79}{article}{
  author={Markoff, A.},
  title={Sur les formes quadratiques binaires ind\'efinies},
  language={French},
  journal={Math. Ann.},
  volume={15},
  date={1879},
  pages={381--409},
  issn={0025-5831},
}

\bib{Mar80}{article}{
  author={Markoff, A.},
  title={Sur les formes quadratiques binaires ind\'efinies. II},
  language={French},
  journal={Math. Ann.},
  volume={17},
  date={1880},
  number={3},
  pages={379--399},
  issn={0025-5831},
  review={\MR {1510073}},
  doi={10.1007/BF01446234},
}

\bib{MM10}{article}{
   author={Mayer, Dieter},
   author={M\"{u}hlenbruch, Tobias},
   title={Nearest $\lambda_q$-multiple fractions},
   conference={
      title={Spectrum and dynamics},
   },
   book={
      series={CRM Proc. Lecture Notes},
      volume={52},
      publisher={Amer. Math. Soc., Providence, RI},
   },
   date={2010},
   pages={147--184},
   review={\MR{2743437}},
   doi={10.1090/crmp/052/09},
}

\bib{Mor18}{article}{
   author={Moreira, Carlos Gustavo},
   title={Geometric properties of the Markoff and Lagrange spectra},
   journal={Ann. of Math. (2)},
   volume={188},
   date={2018},
   number={1},
   pages={145--170},
   issn={0003-486X},
   review={\MR{3815461}},
   doi={10.4007/annals.2018.188.1.3},
}


\bib{Nak95}{article}{
   author={Nakada, Hitoshi},
   title={Continued fractions, geodesic flows and Ford circles},
   conference={
      title={Algorithms, fractals, and dynamics},
      address={Okayama/Kyoto},
      date={1992},
   },
   book={
      publisher={Plenum, New York},
   },
   isbn={0-306-45127-1},
   date={1995},
   pages={179--191},
   review={\MR{1402490}},
}

\bib{Pan22}{article}{
   author={Panti, Giovanni},
   title={Attractors of dual continued fractions},
   journal={J. Number Theory},
   volume={240},
   date={2022},
   pages={50--73},
   issn={0022-314X},
   review={\MR{4458233}},
   doi={10.1016/j.jnt.2021.12.011},
}

\bib{Par77}{article}{
   author={Parson, L. Alayne},
   title={Normal congruence subgroups of the Hecke groups $G(2\sp{(1/2)})$
   and $G(3\sp{(1/2)})$},
   journal={Pacific J. Math.},
   volume={70},
   date={1977},
   number={2},
   pages={481--487},
   issn={0030-8730},
   review={\MR{0491507}},
}

\bib{Rom08}{article}{
   author={Romik, Dan},
   title={The dynamics of Pythagorean triples},
   journal={Trans. Amer. Math. Soc.},
   volume={360},
   date={2008},
   number={11},
   pages={6045--6064},
   issn={0002-9947},
   review={\MR{2425702 (2009i:37101)}},
   doi={10.1090/S0002-9947-08-04467-X},
}

\bib{Ros54}{article}{
   author={Rosen, David},
   title={A class of continued fractions associated with certain properly
   discontinuous groups},
   journal={Duke Math. J.},
   volume={21},
   date={1954},
   pages={549--563},
   issn={0012-7094},
   review={\MR{65632}},
}

 

\bib{Sch76}{article}{
  author={Schmidt, Asmus L.},
  title={Minimum of quadratic forms with respect to Fuchsian groups. I},
  journal={J. Reine Angew. Math.},
  volume={286/287},
  date={1976},
  pages={341--368},
  issn={0075-4102},
  review={\MR {0457358}},
  doi={10.1515/crll.1976.286-287.341},
}

\bib{Sch77}{article}{
   author={Schmidt, Asmus L.},
   title={Minimum of quadratic forms with respect to Fuchsian groups. II},
   journal={J. Reine Angew. Math.},
   volume={292},
   date={1977},
   pages={109--114},
   issn={0075-4102},
   review={\MR{457359}},
   doi={10.1515/crll.1977.292.109},
}

\bib{SS95}{article}{
   author={Schmidt, Thomas A.},
   author={Sheingorn, Mark},
   title={Length spectra of the Hecke triangle groups},
   journal={Math. Z.},
   volume={220},
   date={1995},
   number={3},
   pages={369--397},
   issn={0025-5874},
   review={\MR{1362251}},
   doi={10.1007/BF02572621},
}


\bib{Ser88}{article}{
   author={Series, Caroline},
   title={The Markoff spectrum in the Hecke group $G_5$},
   journal={Proc. London Math. Soc. (3)},
   volume={57},
   date={1988},
   number={1},
   pages={151--181},
   issn={0024-6115},
   review={\MR{0940433}},
   doi={10.1112/plms/s3-57.1.151},
}

\bib{Vul97}{article}{
   author={Vulakh, L. Ya.},
   title={The Markov spectra for triangle groups},
   journal={J. Number Theory},
   volume={67},
   date={1997},
   number={1},
   pages={11--28},
   issn={0022-314X},
   review={\MR{1485425}},
   doi={10.1006/jnth.1997.2181},
}
\bib{Vul00}{article}{
   author={Vulakh, L. Ya.},
   title={The Markov spectra for Fuchsian groups},
   journal={Trans. Amer. Math. Soc.},
   volume={352},
   date={2000},
   number={9},
   pages={4067--4094},
   issn={0002-9947},
   review={\MR{1650046}},
   doi={10.1090/S0002-9947-00-02455-7},
}

\end{biblist}
\end{bibdiv}

\Addresses

\end{document}